\documentclass[11pt]{amsart}

\usepackage{amsmath,amssymb,amsthm,amsfonts,amscd,array,graphicx}
\usepackage{mathrsfs}
\usepackage[T1]{fontenc}
\usepackage[utf8]{inputenc}
\usepackage{fullpage}
\usepackage[unicode]{hyperref}
\hypersetup{pdfpagemode = UseNone}
\hypersetup{pdfstartview = FitH}
\hypersetup{pdfborder={0 0 0.5}}

\newcommand{\rk}{\operatorname{rk}}
\newcommand{\Gal}{\operatorname{Gal}}
\newcommand{\C}{\mathbb{C}}

\newcommand{\K}{\mathbb{K}}

\newcommand{\A}{\mathbb{A}}
\newcommand{\V}{\mathbb{V}}
\newcommand{\Jph}{\left|J(\varphi)\right|}
\newcommand{\Kx}{\K[\overline{x}]} 
\newcommand{\Kf}{\K[\overline{f}]}
\newcommand{\Kox}{\K(\overline{x})} 
\newcommand{\Kof}{\K(\overline{f})}

\newtheorem{theorem}{Theorem}[subsection]
\newtheorem{propos}[theorem]{Proposition}
\newtheorem{lem}[theorem]{Lemma}
\newtheorem{cor}[theorem]{Corollary}

\theoremstyle{definition}
\newtheorem{defin}{Definition}[section]

\newtheorem{ex}[theorem]{Example}

\title{Contracted divisors and Degree-Two Maps}
\author{Anton Trushin}
\email{Trushin.ant.nic@yandex.ru}
\thanks{The work was supported by the Foundation for the Advancement of Theoretical Physics and Mathematics “BASIS”}
\address{Faculty of Computer Science, HSE University, Pokrovsky Boulevard 11, Moscow, 109028 Russia}
\begin{document}
	\maketitle
	
\begin{abstract}
	We consider polynomial maps of affine space over an algebraically closed field of characteristic zero. We prove that every irreducible component of the zero locus of the Jacobian determinant corresponds to either a contracted divisor or a branching divisor. We further consider polynomial maps of degree two without contracted divisors and show that the Jacobian determinant is irreducible, anti-invariant under the Galois involution, and coincides with the defining equation of the unique branching divisor.
\end{abstract}

\section{Introduction}

Let $\K$ be an algebraically closed field of characteristic zero, and let $\A^n$ denote the affine space over $\K$.
Consider a polynomial map
\[
\varphi: \A^n \to \A^n, \qquad
\varphi = \overline{f} = (f_1,\ldots,f_n), \quad f_i \in \K[\overline{x}],
\]
where $\overline{x} = (x_1,\ldots,x_n)$.

Let
\[
J(\varphi)=\left(\frac{\partial f_i}{\partial x_j}\right)
\]
be the Jacobian matrix and write $\Jph=\det J(\varphi)$.

A polynomial map is called a \emph{Keller map} if $\Jph$ is a nonzero constant. 
The Jacobian Conjecture, posed in~\cite{Keller}, asserts that every Keller map is an automorphism. 
Despite considerable effort, the conjecture remains open even in dimension two.

One approach to the Jacobian Conjecture is to study the behaviour of irreducible polynomials under the dual homomorphism
\[
\varphi^*:\K(\overline{f}) \to \K(\overline{x}), \qquad \varphi^*(H)=H(\overline{f}).
\]
By~\cite[Theorem~3.7]{Bak}, a Keller map $\varphi:\C^n\to\C^n$ is
invertible if and only if $\varphi^*$ maps irreducible polynomials to
irreducible ones.
By~\cite[Theorem~5.1]{Jed}, this extends to arbitrary fields of
characteristic zero: a polynomial map is invertible if and only if
$\varphi^*$ preserves irreducibility.
Moreover, by~\cite[Theorem~5.2]{Jed}, for a Keller map the dual
homomorphism $\varphi^*$ sends square-free polynomials to square-free
polynomials.

In Section~\ref{zerosJac} we introduce two types of divisors naturally associated with polynomial maps of affine space: 
\emph{contracted divisors}, viewed as analogues of exceptional divisors of birational morphisms, 
and \emph{branching divisors}, analogous to branch loci of finite morphisms.
In Theorem~\ref{classif} we show that every irreducible component of the zero locus of~$\Jph$ corresponds to either a contracted divisor or a branching divisor.

This dichotomy is natural, but we are not aware of a reference where it is stated explicitly, so we include a proof.
As an application, we obtain an alternative proof of the result on square-free polynomials mentioned above.

We then consider polynomial maps of finite degree. 
Recall that the degree of $\varphi$ is defined as the degree of the field extension $\K(\overline{x})/\K(\overline{f})$.
By~\cite{BCW}, the Jacobian Conjecture holds whenever this extension
is Galois. 
In particular, this applies to maps of degree two. 
Further results for low degrees in dimension two were obtained in~\cite{O,DO,D}.

In Section~\ref{mofd2} we study polynomial maps of degree two.
In this case the field extension $\K(\overline{x})/\K(\overline{f})$ is Galois of degree two, and hence
\[
\Gal(\K(\overline{x}),\K(\overline{f}))=\{id,\tau\},
\]
where $\tau$ is a birational involution.

We relate the action of $\tau$ on $\K(\overline{x})$ to the geometry of the fiber product
\[
Z=\A^n\times_{\varphi}\A^n,
\]
which parametrizes pairs of points with the same image under $\varphi$. 
For maps of degree two, the variety $Z$ contains the graph of the involution $\tau$, while its remaining components correspond to subvarieties contracted by $\varphi$.

Using this description, we show that, in the absence of contracted divisors, the branching is governed by a single irreducible polynomial $s$ satisfying $\tau(s)=-s$. 
In particular, all irreducible divisors of the Jacobian coincide with this polynomial, and we prove in Theorem~\ref{mainth} that
\[
\Jph=s.
\]

Thus, in this setting, the Jacobian is anti-invariant under the nontrivial Galois involution.

The author is grateful to Alexander Perepechko and Dmitry Trushin for numerous useful discussions and remarks.

	\section{Preliminaries, notation and conventions}
	
	Let $\K$ be an algebraically closed field of characteristic zero, and let
	\[
	\varphi:\A^n \to \A^n, \qquad 
	\varphi = (\overline{f}) = (f_1,\ldots,f_n), \quad f_j \in \K[\overline{x}],
	\]
	where $\overline{x} = (x_1,\ldots,x_n)$, be a polynomial map.
	We denote by
	\[
	\varphi^*:\K[\overline{f}] \to \K[\overline{x}], \qquad 
	\varphi^*(H)=H(\overline{f}),
	\]
	the induced homomorphism of coordinate rings.
	
	More generally, for a homomorphism of function fields
	\[
	\alpha:\K(X)\to\K(Y)
	\]
	we denote by
	\[
	\alpha^*:X\dasharrow Y
	\]
	the corresponding rational map.
	
	Recall that the differential $d\varphi$ is the map of tangent spaces
	induced by the Jacobian matrix
	\[
	J(\varphi)=\left(\frac{\partial f_i}{\partial x_j}\right)
	=
	\left(\frac{\partial\overline{f}}{\partial\overline{x}}\right).
	\]
	For a variety $X$ we denote its tangent space by $T_X$, and by $T_{a,X}$ the tangent space at a point $a\in X$.
	We say that $d\varphi$ is degenerate at $a\in X$ if
	\[
	\dim\bigl(d_a\varphi(T_{a,X})\bigr)<\dim X.
	\]
	
	We use the notation
	\[
	\frac{\partial H(\overline{f})}{\partial\overline{x}}
	=
	\left(
	\frac{\partial H(f_1,\ldots,f_n)}{\partial x_1},
	\ldots,
	\frac{\partial H(f_1,\ldots,f_n)}{\partial x_n}
	\right).
	\]
	
	For a variety $Y$ we denote by $I(Y)$ its vanishing ideal, and for polynomials $h_1,\ldots,h_m$ we write
	\[
	\V(h_1,\ldots,h_m)
	\]
	for their common zero locus.
	
	By a \emph{general point} of a variety $X$ we mean a point belonging to a Zariski open dense subset of~$X$.
	
	We define the degree of $\varphi$ as
	\[
	\deg\varphi=[\K(\overline{x}):\K(\overline{f})].
	\]
	
 	Let $X$ be an irreducible variety and let
	\[
	\xi:X\dasharrow Y
	\]
	be a rational map.
	We define the graph $\Gamma_\xi$ as the closure in $X\times Y$ of
	$\{(x,\xi(x))\mid x\in U\}$, where~$U\subset X$ is any nonempty open subset on which $\xi$ is regular.
	
	\medskip
	
	We now recall several basic facts concerning subvarieties contracted by a polynomial map.
	
	An irreducible variety $X\subseteq\A^n$ is said to be \emph{contracted by $\varphi$}, or simply \emph{contracted} when $\varphi$ is clear from the context, if
	\[
	\dim X > \dim \varphi(X).
	\]
	A reducible variety is contracted if each of its irreducible components is contracted.
	
	If a variety $X$ is contracted, then the differential $d\varphi$ is degenerate at smooth points of $X$, and the Jacobian $\Jph$ vanishes on $X$. Indeed, if $Y=\overline{\varphi(X)}$ and $\dim Y<\dim X$, then for a general point $a\in X$ one has
	\[
	\dim\bigl(d_a\varphi(T_{a}X)\bigr)\le \dim Y<\dim X,
	\]
	which implies that the Jacobian matrix is singular along $X$. In particular, if $X=\V(h)$ is an irreducible hypersurface contracted by $\varphi$, then $\Jph$ is divisible by $h$.
	
	The converse implication follows from a standard result (see~\cite[Chapter~I, \S5, Lemma~2.4]{Shaf}), which implies that if $X$ is irreducible and the differential $d\varphi$ is degenerate on a dense subset of $X$, then $\dim \varphi(X)<\dim X$, and hence $X$ is contracted.
	
	\begin{propos}\label{cont}
		An irreducible variety $X$ is contracted by $\varphi$ if and only if the differential $d\varphi$ is degenerate at a general point of $X$.
	\end{propos}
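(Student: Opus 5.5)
The plan is to prove the two implications separately. The direct implication is essentially the argument already given before the statement; the converse rests on generic smoothness of a dominant morphism. Write $Y=\overline{\varphi(X)}$, which is irreducible because $X$ is. Let $\psi=\varphi|_X:X\to Y$, a dominant morphism of irreducible varieties.

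\emph{Contracted $\Rightarrow$ degenerate.} Suppose $\dim Y<\dim X$. The smooth locus $X_{\mathrm{reg}}$ is open and dense in $X$. For $a\in X_{\mathrm{reg}}$ the image $d_a\varphi(T_{a,X})$ lies in $T_{\varphi(a),Y}$. On a dense open subset of $X_{\mathrm{reg}}$ we have $\varphi(a)$ landing in the smooth locus of $Y$, since $\psi$ is dominant, so the preimage of $Y_{\mathrm{reg}}$ is a nonempty open set. There $\dim T_{\varphi(a),Y}=\dim Y<\dim X$, so $d\varphi$ is degenerate at a general point of $X$. No further care is needed here.

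\emph{Degenerate $\Rightarrow$ contracted.} Argue by contraposition: assume $\dim Y=\dim X$, and show that $d\varphi$ is nondegenerate on a dense open subset of $X$. This is enough, because a property holding on a dense open set cannot fail on another dense open set; the two would intersect.

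\begin{enumerate}
\item Since $\operatorname{char}\K=0$, the extension $\K(X)/\K(Y)$ induced by $\psi$ is finitely generated of transcendence degree $\dim X-\dim Y=0$. Hence it is finite and separable.
\item By separability, $\Omega_{\K(X)/\K(Y)}=0$. Equivalently, $\psi^*\Omega_{Y}\otimes\K(X)\to\Omega_X\otimes\K(X)$ is surjective, so $d\psi$ is an isomorphism of tangent spaces at the generic point.
\item By coherence, this surjectivity spreads out to a dense open $U\subset X_{\mathrm{reg}}\cap\psi^{-1}(Y_{\mathrm{reg}})$. At every $a\in U$ the map $d_a\psi:T_{a,X}\to T_{\varphi(a),Y}$ is then surjective, hence injective since the dimensions agree.
\item Since $d_a\psi$ is the restriction of $d_a\varphi$ to $T_{a,X}$, we get $\dim d_a\varphi(T_{a,X})=\dim X$ on $U$, i.e.\ $d\varphi$ is nondegenerate there.
\end{enumerate}

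Steps 1 through 4 are exactly the content of the cited lemma from~\cite{Shaf}, which can be invoked directly in place of that chain.

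\emph{Main obstacle.} The delicate point is the converse, specifically the passage from the field-theoretic statement in step 2 to a pointwise statement on an open set in step 3. This is where characteristic zero is essential; in characteristic $p$, Frobenius-type maps are counterexamples. Concretely, I would choose local coordinates: $\dim X$ functions $t_i\in\K(X)$ pulled back from a transcendence basis of $\K(Y)$. Separability makes $dt_1,\dots,dt_{\dim X}$ a basis of $\Omega_{\K(X)/\K}$. Hence the determinant expressing them in terms of a local basis of $\Omega_X$ is a nonzero rational function, and its nonvanishing locus gives $U$.
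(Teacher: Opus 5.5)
Your proposal is correct and follows the paper's route. The forward direction is the same bound $\dim d_a\varphi(T_{a,X})\le\dim Y<\dim X$ at a general point, and for the converse the paper simply invokes the cited lemma from \cite{Shaf}, whose content you correctly unpack: separability of $\K(X)/\K(Y)$ in characteristic zero, then spreading out surjectivity of $\psi^*\Omega_Y\to\Omega_X$. One minor slip in step 3: surjectivity on cotangent spaces dualizes directly to injectivity of $d_a\psi$, and surjectivity then follows from the dimension count, not the other way around; since the dimensions agree, this does not affect the argument.
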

Note that if $\varphi$ is birational, then contracted divisors coincide with exceptional divisors (see~\cite{Shaf}). However, in general, not every contraction is a contraction of a hypersurface.

\begin{ex}
	Consider the map $\varphi:\A^3\to\A^3$ given by
	\[
	\varphi=(yx^2+zx,y,z).
	\]
	If $(y,z)\neq(0,0)$, then a point has at most two preimages. Thus the only contracted subvariety is the line $L=\{(x,0,0)\}$.
\end{ex}
	
We also recall the following standard result, often referred to as the Jacobian criterion. The following conditions are equivalent:
\begin{enumerate}
	\item the map $\varphi$ is dominant, equivalently, $\A^n$ is not contracted by $\varphi$;
	
	\item the Jacobian determinant $\Jph$ is not the zero polynomial;
	
	\item the polynomials $f_1,\ldots,f_n$ are algebraically independent.
\end{enumerate}
	
	Moreover, if $\varphi$ is dominant, then $\varphi^*$ is injective and induces a finite field extension $\K(\overline{f})\subset \K(\overline{x}).$
	
	We will also use the following standard fact (see~\cite[Proposition~7.16]{Harris}): the number of preimages of a general point under a rational map $\xi$ is finite if and only if the induced homomorphism $\xi^*$ gives a finite extension of function fields $\K(X)/\K(Y)$; in this case, the number of preimages of a general point is equal to the degree of this extension.
	
\section{Zeros of the Jacobian}\label{zerosJac}

\subsection{Contractions and the dual map}

We now study contracted divisors from the point of view of the dual map $\varphi^*$.

\begin{propos}\label{frac}
	Let $\varphi:\A^n\to\A^n$ be a polynomial map without contracted divisors. Then the following statements hold.
	
	\begin{enumerate}
		\item If $H_1,H_2\in\Kf$ are relatively prime, then 
		$H_1(\overline{f})$ and $H_2(\overline{f})$ are relatively prime.
		
		\item $\Kx\cap\Kof=\Kf$.
		
		\item If $h,g\in\Kx$ are relatively prime and $\frac{h}{g}\in\Kof$, then $h,g\in\Kf$.
	\end{enumerate}
\end{propos}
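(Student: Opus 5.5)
Throughout we assume $\varphi$ is dominant, as is implicit in the statement since $\Kof$ must be a subfield of $\Kox$. Then $f_1,\ldots,f_n$ are algebraically independent by the Jacobian criterion, so $\Kf$ is a polynomial ring, hence a UFD, and $\varphi^*$ is injective. The plan is to prove (1) geometrically, using the hypothesis that there are no contracted divisors. Parts (2) and (3) then follow from (1) by purely algebraic divisibility arguments.

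For (1), I will argue by contradiction. Suppose $H_1(\overline{f})$ and $H_2(\overline{f})$ share an irreducible factor $h\in\Kx$. If either $H_i$ is a nonzero constant the claim is trivial, so assume both are nonconstant. Since every point of $\V(h)$ is a zero of both $H_1(\overline{f})$ and $H_2(\overline{f})$, we get
\[
\varphi(\V(h))\subseteq \V(H_1,H_2)\subset\A^n .
\]
Because $H_1,H_2$ are relatively prime in the polynomial ring $\Kf$, their common zero locus $\V(H_1,H_2)$ has codimension at least two. Indeed, an irreducible component of codimension one would be a hypersurface $\V(p)$ with $p$ irreducible dividing both $H_1$ and $H_2$, by the Nullstellensatz and factoriality. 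Hence
\[
\dim\varphi(\V(h))\le n-2<n-1=\dim\V(h),
\]
so the irreducible hypersurface $\V(h)$ is a contracted divisor, contradicting the hypothesis. I expect this step to be the main, if mild, obstacle: the key point is the codimension-two estimate for the common zeros of coprime polynomials.

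For (2), I take $p\in\Kx\cap\Kof$ and write $p=H_1(\overline{f})/H_2(\overline{f})$ with $H_1,H_2\in\Kf$ relatively prime, which is possible since $\Kf$ is a UFD. Then $p\,H_2(\overline{f})=H_1(\overline{f})$. If $H_2$ were nonconstant, then $H_2(\overline{f})$ would be nonconstant by injectivity of $\varphi^*$. Any irreducible factor of $H_2(\overline{f})$ would then divide $H_1(\overline{f})$, contradicting (1). So $H_2$ is constant and $p\in\Kf$.

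For (3), I write $h/g=H_1(\overline{f})/H_2(\overline{f})$ with $H_1,H_2$ relatively prime, so that $h\,H_2(\overline{f})=g\,H_1(\overline{f})$. By (1), $H_1(\overline{f})$ and $H_2(\overline{f})$ are relatively prime, so $H_2(\overline{f})\mid g$. Since $\gcd(h,g)=1$, also $g\mid H_2(\overline{f})$. Hence $g=c\,H_2(\overline{f})$ for some $c\in\K^*$, and then $h=c\,H_1(\overline{f})$, so both $h,g\in\Kf$.
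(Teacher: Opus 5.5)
Your proof is correct and follows essentially the same route as the paper: the geometric argument $\varphi(\V(h))\subseteq\V(H_1,H_2)$ with $\dim\V(H_1,H_2)\le n-2$ for (1), followed by the same reduced-fraction divisibility arguments for (2) and (3). You also spell out details the paper leaves implicit, namely why coprime $H_1,H_2$ have a common zero locus of codimension at least two, and the two-way divisibility $H_2(\overline{f})\mid g$ and $g\mid H_2(\overline{f})$ in (3).
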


\begin{proof}
	(1) Suppose that 
	$
	g=\gcd(H_1(\overline{f}),H_2(\overline{f}))
	$
	is nonconstant. Then
	\[
	\varphi(\V(g))
	\subseteq
	\V(H_1)\cap\V(H_2)
	=
	\V(H_1,H_2).
	\]
	Since $H_1$ and $H_2$ are relatively prime, we have
	$
	\dim\V(H_1,H_2)<n-1.
	$
	Hence
	\[
	\dim\varphi(\V(g))<n-1=\dim\V(g),
	\]
	so $\V(g)$ is contracted, a contradiction.
	
	(2) Let
	$
	h=\frac{H_1(\overline{f})}{H_2(\overline{f})}\in\Kx\cap\Kof,
	$
	where $\frac{H_1}{H_2}$ is reduced. By~(1), the polynomials
	$
	H_1(\overline{f})
	$
	and
	$
	H_2(\overline{f})
	$
	are relatively prime. Since $h$ is a polynomial, the denominator must be constant, hence $h\in\Kf$.
	
	(3) Write
	$
	\frac{h}{g}
	=
	\frac{H(\overline{f})}{G(\overline{f})},
	$
	where $H,G\in\Kf$ and $\frac{H}{G}$ is reduced. By~(1), the polynomials
	$
	H(\overline{f})
	$
	and~$G(\overline{f})$ are relatively prime. Therefore the numerators and denominators of $\frac{h}{g}$ and $\frac{H(\overline{f})}{G(\overline{f})}$ differ only by a nonzero constant, hence $h,g\in\Kf$.
\end{proof}

\begin{propos}\label{gcd1}
	Let an irreducible hypersurface $X=\V(h)$ be contracted by $\varphi$. 
	Then there exist relatively prime irreducible polynomials $H_1,H_2\in\Kf$ such that both $\varphi^*(H_1)$ and $\varphi^*(H_2)$ are divisible by $h$.
\end{propos}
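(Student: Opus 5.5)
Let $Y=\overline{\varphi(X)}\subseteq\A^n$ denote the closure of the image of the contracted hypersurface $X=\V(h)$. Since $X$ is contracted and irreducible, $Y$ is an irreducible closed subvariety with $\dim Y<\dim X=n-1$, so $\operatorname{codim} Y\ge 2$. The plan is to find two relatively prime irreducible polynomials in $\Kf$ that vanish on $Y$; their pullbacks then vanish on $X$, and $h\mid\varphi^*(H_i)$ follows.

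First, choose $H_1$. The ideal $I(Y)\subseteq\Kf$ is a prime ideal of height $\operatorname{codim}Y\ge2$, so in particular it is nonzero. Any nonzero element factors into irreducibles, and since the ideal is prime, one of these factors already lies in $I(Y)$. Call this irreducible factor $H_1\in I(Y)$.

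Next, choose $H_2$ not divisible by $H_1$. The hypersurface $\V(H_1)$ is irreducible of dimension $n-1$, while $\dim Y\le n-2$. Hence $I(Y)\neq (H_1)$: otherwise $Y=\V(H_1)$, which would have dimension $n-1$. So there is some $G\in I(Y)$ with $H_1\nmid G$. Factor $G$ into irreducibles. Some factor $H_2$ lies in the prime ideal $I(Y)$. Since $H_1\nmid G$, this $H_2$ is not an associate of $H_1$. Two non-associate irreducibles in the UFD $\Kf$ are relatively prime.

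Finally, pull back. For $a\in X$ we have $\varphi(a)\in Y$, so $H_i(\varphi(a))=0$. Thus $\varphi^*(H_i)=H_i(\overline f)$ vanishes on $\V(h)$. Since $h$ is irreducible, Hilbert's Nullstellensatz gives $h\mid\varphi^*(H_i)$ for $i=1,2$.

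There is no real obstacle here. The one point that needs care is the dimension comparison ruling out $I(Y)=(H_1)$, which is exactly where the hypothesis that $X$ is contracted enters. It is also worth noting that $\varphi^*(H_i)$ might be zero if $\varphi$ is not dominant. This does not affect the statement, since $h$ divides $0$.
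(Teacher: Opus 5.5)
Your proof is correct and follows the same route as the paper: both take two non-associate irreducible elements of the prime ideal $I(Y)$, where $Y=\overline{\varphi(X)}$, and use $\dim Y\le n-2$ to rule out $I(Y)$ being principal. Your version makes explicit two steps the paper leaves implicit: extracting irreducible elements of $I(Y)$ via primality, and the Nullstellensatz argument giving $h\mid\varphi^*(H_i)$.
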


\begin{proof}
	Let $Y=\overline{\varphi(X)}$. 
	Then $Y$ is irreducible. 
	Let $I(Y)$ be generated by polynomials $G_1,\ldots,G_m$. 
	Since $Y$ is irreducible, we may assume that each $G_j$ is irreducible. 
	By assumption $\dim Y<n-1$, hence among these generators there exist two relatively prime ones.
\end{proof}

To illustrate the phenomenon of contracted divisors, we give a simple example.

\begin{ex}\label{exxy}
	Let
	\[
	\varphi:\A^2\to\A^2,\qquad 
	\varphi=(xy,y)=(f_1,f_2).
	\]
	Then $f_1$ and $f_2$ are relatively prime in $\Kf$, but in $\Kx$ they have a common divisor:
	\[
	\gcd(xy,y)=y.
	\]
	In accordance with Proposition~\ref{frac}, the line
	\[
	L=\{(x,0)\}
	\]
	is contracted by $\varphi$, namely $\varphi(L)=(0,0)$.
	
	Note also that the fraction $\frac{f_1}{f_2}$ is reduced in $\K(f_1,f_2)$ but not reduced in $\K(x,y)$ (cf.~\ref{frac}), and
	\[
	\K[x,y]\cap\K(f_1,f_2)\neq\K[f_1,f_2],
	\]
	since
	\[
	\frac{f_1}{f_2}=x\in\K[x,y]\cap\K(f_1,f_2)
	\]
	(cf.~\ref{frac}).
	
	As an illustration of Proposition~\ref{frac}, consider $h=x$, $g=1$. 
	They are relatively prime, and
	\[
	\frac{x}{1}=\frac{f_1}{f_2}\in\Kof,
	\]
	but $h=x\notin\Kf$.
\end{ex}

\subsection{Branching}

We now describe the second type of special hypersurfaces, namely branching divisors. 
At the end of the section we show that the zero locus of the Jacobian consists of contractions and branchings.

\begin{defin}
	Let $h \in \Kx$ be irreducible. Let $H \in \Kf$ be an irreducible polynomial such that~$H(\overline{f})$ is divisible by $h$. 
	Then $H$ is called an \emph{image polynomial} of $h$ and is denoted by $\varphi_*(h)$.
\end{defin}

\begin{propos}
	The image polynomial $\varphi_*(h)$ exists for every irreducible $h$. 
	Moreover, if~$X = \V(h)$ is not contracted, then $\varphi_*(h)$ is uniquely determined up to multiplication by a nonzero constant.
\end{propos}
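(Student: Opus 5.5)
Both claims are about the closure $Y=\overline{\varphi(X)}$ of the image of $X=\V(h)$; it is irreducible, being the closure of the image of an irreducible variety. The guiding idea is that the ideal $I(Y)$ is exactly the set of $H\in\Kf$ with $h\mid H(\overline{f})$. Indeed, $H$ vanishes on $Y$ if and only if $H(\overline{f})=\varphi^*(H)$ vanishes on $X=\V(h)$. Since $h$ is irreducible, this holds if and only if $h$ divides $H(\overline{f})$, by the Nullstellensatz. So an image polynomial of $h$ is the same thing as an irreducible element of the prime ideal $I(Y)$.

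\emph{Existence.} Since $\dim Y\le n-1$, the ideal $I(Y)$ is nonzero. Pick a nonzero $G\in I(Y)$ and factor it into irreducibles in $\Kf$. Because $I(Y)$ is prime, some irreducible factor $H$ of $G$ lies in $I(Y)$. By the equivalence above, $h\mid H(\overline{f})$, so $H$ is an image polynomial $\varphi_*(h)$. In the dominant case one could also use that $h$ divides $N(h)(\overline{f})$, where $N$ is the norm, but the prime-ideal argument avoids needing $\varphi$ to be dominant.

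\emph{Uniqueness.} Now assume $X$ is not contracted, so $\dim Y=\dim X=n-1$ and $Y$ is an irreducible hypersurface. Then $I(Y)=(H_0)$ for an irreducible $H_0$: take any irreducible $H\in I(Y)$ as above. Then $\V(H)\supseteq Y$, and $\V(H)$ is irreducible of dimension $n-1$, so $\V(H)=Y$ and $I(Y)=(H)$ because $H$ is irreducible. If $H'$ is another image polynomial of $h$, then $H'\in I(Y)=(H)$. Since $H'$ is irreducible, $H'=cH$ with $c\in\K^*$.

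The only step needing care is the equivalence $I(Y)=\{H\in\Kf : h\mid H(\overline{f})\}$. The inclusion from left to right uses that the vanishing ideal of $\V(h)$ is $(h)$ for irreducible $h$. The inclusion from right to left uses that $H$ vanishes on $\varphi(X)$, hence on its closure. This mirrors the argument of Proposition~\ref{gcd1}, where, in the contracted case, $I(Y)$ contains two coprime irreducible generators, which is why uniqueness fails there.
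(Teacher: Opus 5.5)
Your proof is correct and follows the same route as the paper. Both arguments identify image polynomials with irreducible elements of the prime ideal $I(\overline{\varphi(X)})$, and in the non-contracted case both use that $\overline{\varphi(X)}=\V(H)$ is an irreducible hypersurface with $I(\overline{\varphi(X)})=(H)$. Your explicit check that $h\mid H(\overline{f})$ if and only if $H\in I(Y)$ fills in a step the paper uses only implicitly: it is what shows that every image polynomial, not just the chosen generator, is a scalar multiple of $H$.
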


\begin{proof}
	The variety $\varphi(X)$ is irreducible as the image of an irreducible variety. 
	Hence $\varphi_*(h)$ can be chosen as any irreducible polynomial in the ideal of $\overline{\varphi(X)}$.
	
	Suppose now that $X$ is not contracted. Then $\dim \varphi(X) = n-1$, and therefore $Y=\overline{\varphi(X)}$ is an irreducible hypersurface. 
	Hence $Y = \V(H)$ for some irreducible $H \in \Kf$, and this polynomial $H$ is unique up to a nonzero scalar multiple.
\end{proof}

\begin{defin}\label{defcreas}
	Let $h$ be irreducible and let $X = \V(h)$ be not contracted by $\varphi$. 
	Then $X$ is called a \emph{branching divisor} for $\varphi$ if $\varphi^*(\varphi_*(h))$ is divisible by $h^2$.
\end{defin}

By Corollary~\ref{cont}, a variety $X$ is contracted by $\varphi$ if and only if the differential $d\varphi$ is degenerate on the tangent space at a general point of $X$. 
We now show that a non-contracted hypersurface $X=\V(h)$ is a branching divisor if and only if the differential is degenerate on the conormal direction to $\varphi(X)$.

\begin{propos}\label{branch}
	Let $h\in\Kx$ be irreducible, let $X=\V(h)$, and let $H=\varphi_*(h)$. 
	The following conditions are equivalent:
	\begin{enumerate}
		\item $X$ is a branching divisor;
		
		\item $X$ is not contracted and
		$
		\frac{\partial H(\overline{f})}{\partial\overline{f}}
		\cdot
		J(\varphi)=0
		$
		on $X$.
	\end{enumerate}
	
	Moreover, these conditions imply that $\Jph=0$ on $X$.
\end{propos}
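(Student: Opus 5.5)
The plan is to apply the chain rule to $H(\overline{f})$. Since $H(\overline{f})=\varphi^*(H)$, we have
\[
\frac{\partial H(\overline{f})}{\partial\overline{x}}
=\frac{\partial H}{\partial\overline{f}}(\overline{f})\cdot J(\varphi).
\]
Since $h$ divides $H(\overline{f})$, write $H(\overline{f})=h^k g$ with $k\ge1$ and $h\nmid g$. The whole argument then reduces to one question: does the gradient of $H(\overline{f})$ vanish identically on $X$? I will show that this happens exactly when $k\ge 2$. Both conditions contain the hypothesis that $X$ is not contracted, so that hypothesis can be carried along throughout.

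(1)$\Rightarrow$(2). If $k\ge2$, then differentiating gives
\[
\frac{\partial H(\overline{f})}{\partial\overline{x}}=k h^{k-1}g\,\frac{\partial h}{\partial \overline{x}}+h^k\frac{\partial g}{\partial\overline{x}},
\]
and both terms are divisible by $h$. So the gradient vanishes on $X$, and by the chain rule the row vector $\frac{\partial H}{\partial\overline{f}}(\overline{f})\cdot J(\varphi)$ vanishes on $X$.

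(2)$\Rightarrow$(1). Suppose the product vanishes on $X$. Then every partial derivative $\partial_{x_j}(h g)$ is divisible by $h$ (taking $k=1$), so $g\,\partial_{x_j}h\equiv 0 \pmod h$ for all $j$. Since $h$ is irreducible and $h\nmid g$, we get $h\mid \partial_{x_j}h$ for every $j$. Each $\partial_{x_j}h$ has smaller degree than $h$, so all of them vanish, which forces $h$ to be constant in characteristic zero. This contradicts $h$ being irreducible, so $k\ge2$. This characteristic-zero degree argument is the one real technical point.

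For the final claim, I need that $\frac{\partial H}{\partial\overline{f}}(\overline{f})$ is not identically zero on $X$. Then the row vector is a nonzero vector in the left kernel of $J(\varphi)$ at a general point of $X$, so $\Jph$ vanishes at those points and hence on all of $X$. Nonvanishing holds for the following reason. $Y=\overline{\varphi(X)}=\V(H)$ is a hypersurface because $X$ is not contracted. $H$ is irreducible, so its gradient does not vanish identically on $Y$; in characteristic zero the singular locus of $\V(H)$ is a proper closed subset. The set of $a\in X$ with $\varphi(a)$ smooth on $Y$ is the preimage of a dense open subset of $Y$ under the dominant map $X\to Y$, so it is nonempty and open in $X$. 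At such points the gradient $\frac{\partial H}{\partial\overline{f}}(\varphi(a))$ is nonzero, as required. The main obstacle is only keeping the non-contraction hypothesis in view here, since it is exactly what makes $H$ define $Y$ itself rather than a larger hypersurface containing a smaller image.
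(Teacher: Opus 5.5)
Your proposal is correct and takes essentially the same route as the paper. Both arguments apply the chain rule to $H(\overline f)=h^k g$ for the two implications, and for the Jacobian claim both show that $\partial H/\partial\overline f$ is nonzero on a dense open subset of $X$, using non-contraction and the fact that the singular locus of $\V(H)$ is a proper closed subset. Your char-0 degree argument for $h\nmid\partial_{x_j}h$ only spells out the step the paper states briefly as ``since $h$ is irreducible, $\partial h/\partial\overline x$ does not vanish identically on $X$.''
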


\begin{proof}
	Suppose first that $X$ is a branching divisor. Then
	$
	H(\overline{f})=h^2h_0
	$
	for some $h_0\in\Kx$. By the chain rule,
	\[
	\frac{\partial H(\overline{f})}{\partial\overline{x}}
	=
	\frac{\partial H(\overline{f})}{\partial\overline{f}}
	\cdot
	J(\varphi).
	\]
	On the other hand,
	\[
	\frac{\partial H(\overline{f})}{\partial\overline{x}}
	=
	2hh_0\frac{\partial h}{\partial\overline{x}}
	+
	h^2\frac{\partial h_0}{\partial\overline{x}},
	\]
	hence
	\[
	\frac{\partial H(\overline{f})}{\partial\overline{f}}
	\cdot
	J(\varphi)
	=
	h\left(
	2h_0\frac{\partial h}{\partial\overline{x}}
	+
	h\frac{\partial h_0}{\partial\overline{x}}
	\right),
	\]
	so the product vanishes on $X$.
	
	Since $H$ is irreducible, the vector
	$
	\frac{\partial H}{\partial\overline{f}}
	$
	vanishes only on the singular locus of $\V(H)$, which has dimension at most $n-2$. Since $X$ is not contracted, the image $\varphi(X)$ is not contained in this singular locus. Hence
	$
	\frac{\partial H(\overline{f})}{\partial\overline{f}}\neq0
	$
	on a dense open subset of $X$, and therefore
	$
	\frac{\partial H(\overline{f})}{\partial\overline{f}}
	\cdot
	J(\varphi)=0
	$
	implies that $J(\varphi)$ is singular on a dense open subset of $X$. Thus
	$
	\Jph=0
	$
	on $X$.
	
	Conversely, suppose that $X$ is not contracted and
	$
	\frac{\partial H(\overline{f})}{\partial\overline{f}}
	\cdot
	J(\varphi)=0
	$
	on $X$. Write
	$
	H(\overline{f})=hh_0.
	$
	Differentiating, we obtain
	\[
	\frac{\partial H(\overline{f})}{\partial\overline{x}}
	=
	\frac{\partial H(\overline{f})}{\partial\overline{f}}
	\cdot
	J(\varphi)
	=
	h_0\frac{\partial h}{\partial\overline{x}}
	+
	h\frac{\partial h_0}{\partial\overline{x}}.
	\]
	Restricting to $X$ gives
	$
	h_0\frac{\partial h}{\partial\overline{x}}=0.
	$
	Since $h$ is irreducible,
	$
	\frac{\partial h}{\partial\overline{x}}
	$
	does not vanish identically on $X$. Hence $h$ divides $h_0$, and therefore $H(\overline{f})$ is divisible by $h^2$. Thus $X$ is a branching divisor.
\end{proof}

\subsection{Classification theorem}

We now relate the behavior of the differential and its dual on the zero locus of the Jacobian.

\begin{lem}\label{classiflem}
	Let $h$ be an irreducible divisor of $\Jph$, let $X = \V(h)$, and let~$H = \varphi_*(h)$. 
	Then for a point $a \in X_{reg}$ at least one of the following holds:
	\begin{enumerate}
		\item $\displaystyle \frac{\partial H(\overline{f})}{\partial\overline{f}}\cdot J(\varphi) = 0$,
		\item $\displaystyle \dim\!\left(d_a\varphi(T_{a,X})\right) \leq n - 2$.
	\end{enumerate}
\end{lem}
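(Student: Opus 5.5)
Fix $a\in X_{reg}$. Since $h\mid\Jph$, the matrix $J(\varphi)(a)$ is singular, so $d_a\varphi(T_{a,\A^n})=\operatorname{Im}J(\varphi)(a)$ is a proper subspace of $T_{\varphi(a)}\A^n$ of dimension at most $n-1$. The plan is to compare two subspaces inside this image: the image of the tangent space to $X$, namely $W=d_a\varphi(T_{a,X})$, and the whole image $V=d_a\varphi(T_{a,\A^n})$. Because $T_{a,X}$ is a hyperplane in $T_{a,\A^n}$, we have $W\subseteq V$ and $\dim V\le \dim W+1$. Throughout we read condition (1) at the point $a$, i.e. as the statement that the row vector $\frac{\partial H}{\partial\overline{f}}(\varphi(a))\,J(\varphi)(a)$ vanishes.

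Next, assume (2) fails, so $\dim W\ge n-1$. Since $W\subseteq V$ and $\dim V\le n-1$, this forces $W=V$, and both have dimension exactly $n-1$. It remains to show that the covector $\frac{\partial H}{\partial\overline{f}}(\varphi(a))$ annihilates $V$; then $\frac{\partial H}{\partial\overline{f}}(\varphi(a))\cdot J(\varphi)(a)v=0$ for every $v$, which is (1). By the chain rule, for $v\in T_{a,X}$ we have
\[
\frac{\partial H}{\partial\overline{f}}(\varphi(a))\cdot d_a\varphi(v)=d_a\bigl(H(\overline{f})\bigr)(v).
\]
Since $H(\overline{f})$ is divisible by $h$, we get $d_a(H(\overline{f}))=h_0(a)\,d_ah$ at points of $X$, and this vanishes on $T_{a,X}=\ker d_ah$. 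Hence the covector kills $W=V$, which gives (1).

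The main point needing care is that the argument must not rely on $\frac{\partial H}{\partial\overline{f}}(\varphi(a))\neq 0$ or on $X$ being non-contracted. In fact it does not: if the covector is zero, (1) holds trivially, and the containment $\varphi(X)\subseteq\V(H)$ is automatic from $h\mid H(\overline{f})$. So the dichotomy holds pointwise with no genericity assumption beyond $a\in X_{reg}$, which is used only to have $T_{a,X}=\ker d_ah$, a genuine hyperplane.
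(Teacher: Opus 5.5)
Your proof is correct and follows the paper's argument: both write $H(\overline{f})=hh_0$, use the chain rule to see that $\frac{\partial H}{\partial\overline{f}}\cdot J(\varphi)$ annihilates $T_{a,X}=\ker d_ah$, and combine this with $\rk J(\varphi)(a)\le n-1$. The only difference is that the paper invokes Frobenius' rank inequality to get $\rk\bigl(\frac{\partial H}{\partial\overline{f}}J(\varphi)\bigr)+\rk\bigl(J(\varphi)T(a)\bigr)\le n-1$, whereas you argue directly that $d_a\varphi(T_{a,X})$ must equal the whole image of $J(\varphi)(a)$ when (2) fails, which amounts to proving that inequality in this special case.
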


\begin{proof}
	Let $T(a)$ be a matrix whose columns form a basis of the tangent space $T_{a,X}$ at a point~$a\in X_{reg}$. Then
	\[
	\rk T(a)=n-1,
	\qquad
	\rk\!\left(J(\varphi)\cdot T(a)\right)
	=\dim\!\left(d_{a}\varphi(T_{a,X})\right),
	\]
	and
	\[
	\frac{\partial h}{\partial \overline{x}}\cdot T(a)=0.
	\]
	
	Let $H(\overline{f})=h h_0$. Differentiating gives
	\[
	\frac{\partial H(\overline{f})}{\partial\overline{x}}
	=
	\frac{\partial H(\overline{f})}{\partial\overline{f}}\cdot J(\varphi)
	=
	h_0\frac{\partial h}{\partial\overline{x}}
	+
	h\frac{\partial h_0}{\partial\overline{x}} .
	\]
	
	Restricting to $X$ we obtain
	\[
	\frac{\partial H(\overline{f})}{\partial\overline{f}}\cdot J(\varphi)
	=
	h_0\frac{\partial h}{\partial\overline{x}} .
	\]
	
	Multiplying on the right by $T(a)$ and using 
	$\frac{\partial h}{\partial\overline{x}}\cdot T(a)=0$, we get
	\[
	\frac{\partial H(\overline{f})}{\partial\overline{f}}
	\cdot J(\varphi)\cdot T(a)
	=0.
	\]
	
	Applying Frobenius' inequality to the product
	\[
	\frac{\partial H(\overline{f})}{\partial\overline{f}}\cdot J(\varphi)\cdot T(a),
	\]
	we obtain
	\[
		\rk\!\left(
		\frac{\partial H(\overline{f})}{\partial\overline{f}}
		\cdot J(\varphi)
		\right)
		+
		\rk\!\left(J(\varphi)\cdot T(a)\right)	\le		\rk\!\left(J(\varphi)\right).
	\]
	
	Since $\Jph=0$ on $X$, we have $\rk J(\varphi)\le n-1$, and therefore
	\[
	\rk\!\left(
	\frac{\partial H(\overline{f})}{\partial\overline{f}}
	\cdot J(\varphi)
	\right)
	+
	\rk\!\left(J(\varphi)\cdot T(a)\right)
	\le n-1.
	\]
	
	Hence, if
	\[
	\frac{\partial H(\overline{f})}{\partial\overline{f}}\cdot J(\varphi)\neq 0,
	\]
	then
	\[
	\rk\!\left(J(\varphi)\cdot T(a)\right)\le n-2.
	\]
\end{proof}

\begin{theorem}\label{classif}
	Every irreducible component of the zero locus of $\Jph$ is either contracted
	by $\varphi$ or is a branching divisor.
\end{theorem}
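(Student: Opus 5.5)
Since $\Jph$ is a nonzero polynomial (if it were zero there is nothing to classify beyond the trivial case where $\A^n$ itself is contracted), its zero locus is a hypersurface, and its irreducible components are exactly the sets $X=\V(h)$ for the irreducible divisors $h$ of $\Jph$. So I would fix such an $h$, put $H=\varphi_*(h)$, and assume that $X$ is not contracted. The goal is then to show that $X$ is a branching divisor. By Proposition~\ref{branch} it suffices to check that $\frac{\partial H(\overline{f})}{\partial\overline{f}}\cdot J(\varphi)=0$ on $X$.

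The plan is to apply Lemma~\ref{classiflem} at every point of $X_{reg}$. Since $X$ is not contracted, Proposition~\ref{cont} gives a dense open subset $U\subseteq X_{reg}$ on which $d\varphi$ is nondegenerate on $T_{a,X}$, that is, $\dim d_a\varphi(T_{a,X})=n-1$. This set is open because the rank of $J(\varphi)\cdot T(a)$ is lower semicontinuous, and it is nonempty because $X$ is not contracted. At every $a\in U$, alternative (2) of Lemma~\ref{classiflem} fails, so alternative (1) holds: the row vector $\frac{\partial H(\overline{f})}{\partial\overline{f}}\cdot J(\varphi)$ vanishes at $a$.

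The entries of this vector are polynomials in $\overline{x}$, and they vanish on the dense subset $U$ of the irreducible variety $X$. Hence they vanish on all of $X$, which means they are divisible by $h$. Proposition~\ref{branch} then gives that $X$ is a branching divisor, and this proves the dichotomy.

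The only delicate point is the passage from ``$X$ is not contracted'' to ``condition (2) fails at a general point''. In other words, I must make sure that Proposition~\ref{cont} really supplies nondegeneracy on a dense open set, and not merely at some point. I would handle this by noting that the degeneracy locus $\{a\in X_{reg}:\rk(J(\varphi)T(a))\le n-2\}$ is Zariski closed in $X_{reg}$, since it is given locally by the vanishing of minors. If this locus were dense, Proposition~\ref{cont} (the converse direction, via Shafarevich's lemma) would force $X$ to be contracted. So its complement is dense and open, and the argument above applies.
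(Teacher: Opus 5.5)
Your proof is correct and follows the same route as the paper. You apply Lemma~\ref{classiflem} pointwise on $X_{reg}$, then use irreducibility of $X$ together with Proposition~\ref{cont} and Proposition~\ref{branch} to conclude. Your explicit remark that both alternatives are Zariski-closed conditions (vanishing of polynomials, resp.\ of minors) is what justifies the paper's one-line step ``since $X$ is irreducible, one of these conditions holds for a general point''.
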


\begin{proof}
	Let $h$ be an irreducible divisor of $\Jph$, let
	$X=\V(h)$, and let $H=\varphi_*(h)$. Consider $X_{reg}$.
	
	By Lemma~\ref{classiflem}, for every point $a\in X_{reg}$ one of the two conditions holds.
	
	Since $X$ is irreducible, one of these conditions holds for a general point on $X$.
	
	If
	\[
	\frac{\partial H(\overline{f})}{\partial\overline{f}}\cdot J(\varphi)=0,
	\]
	then by Proposition~\ref{branch}, $X$ is a branching divisor.
	
	If instead
	\[
	\dim\!\left(d_{a}\varphi(T_{a,X})\right)\le n-2
	\]
	for a general point $a\in X$, then by Corollary~\ref{cont}, $X$ is contracted.
\end{proof}

Note that if $\varphi$ is finite, then by Theorem \ref{classif} points on a branching divisor are precisely branch points in the usual sense (see~\cite{Shaf}).
We now apply these results to Keller maps.

\begin{propos}\label{Kmap}
	Let $\varphi=(\overline f)$ and suppose $\Jph\in\K^\times$. Then
	
	\begin{enumerate}
		\item if $H_1,H_2\in\Kf$ are coprime, then $\varphi_*(H_1)$ and $\varphi_*(H_2)$ are also coprime;
		
		\item $\Kx\cap\Kof=\Kf$;
		
		\item if an irreducible polynomial $H\in\Kf$ is square-free, then $\varphi^*(H)$ is also square-free.
	\end{enumerate}
\end{propos}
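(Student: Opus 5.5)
The plan is to use the fact that for a Keller map the zero locus of $\Jph$ is empty, and then read off all three statements from Theorem~\ref{classif}, Proposition~\ref{cont} and Proposition~\ref{frac}.

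\textbf{Step 1: no contracted divisors.} Since $\Jph\in\K^\times$, the differential $d\varphi$ is nondegenerate at every point of $\A^n$. The discussion preceding Proposition~\ref{cont} shows that for any contracted irreducible hypersurface $X=\V(h)$, $h$ divides $\Jph$. A nonzero constant has no nonconstant divisors, so no hypersurface is contracted.

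\textbf{Step 2: statements (1) and (2).} These now follow directly from Proposition~\ref{frac}, applied to a map without contracted divisors. I read (1) as: if $H_1,H_2\in\Kf$ are coprime, then $\varphi^*(H_1)=H_1(\overline f)$ and $\varphi^*(H_2)$ are coprime. This is Proposition~\ref{frac}(1). If $\varphi_*$ is meant literally, one applies the same reasoning to the images: a common nonconstant factor $g$ would make $\V(g)$ map into $\V(H_1,H_2)$, which has dimension $<n-1$. Then $\V(g)$ would be contracted, contradicting Step 1. Statement (2) is Proposition~\ref{frac}(2).

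\textbf{Step 3: statement (3).} Let $H\in\Kf$ be irreducible (square-free), and suppose $h^2\mid H(\overline f)$ for some irreducible $h\in\Kx$.
\begin{itemize}
\item By Step 1, $X=\V(h)$ is not contracted, so $\overline{\varphi(X)}$ is a hypersurface contained in $\V(H)$.
\item Since $H$ is irreducible, $\overline{\varphi(X)}=\V(H)$, and hence $H=\varphi_*(h)$ up to a constant.
\item By Definition~\ref{defcreas}, $X$ is then a branching divisor.
\item By Proposition~\ref{branch}, $\Jph$ vanishes on $X$, contradicting $\Jph\in\K^\times$.
\end{itemize}
For a general square-free $H=\prod H_i$ with pairwise coprime irreducible factors:
\begin{itemize}
\item each $\varphi^*(H_i)$ is square-free by the irreducible case;
\item the $\varphi^*(H_i)$ are pairwise coprime by (1);
\item hence their product $\varphi^*(H)$ is square-free.
\end{itemize}

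The only delicate point is identifying $H$ as the image polynomial of $h$ in Step 3, which requires non-contractedness. Step 1 supplies exactly that, so no serious obstacle is expected.
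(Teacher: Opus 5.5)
Your proposal is correct and follows the paper's proof. $\Jph\in\K^\times$ rules out contracted divisors, (1) and (2) are Proposition~\ref{frac}, and (3) follows from Proposition~\ref{branch} once $H$ is identified with $\varphi_*(h)$, a step you spell out more carefully than the paper does; reading $\varphi_*$ in (1) as $\varphi^*$ is the intended meaning, and your extension to arbitrary square-free $H$ is the argument the paper gives in the subsequent corollary.
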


\begin{proof}
	Since $\Jph$ never vanishes, the map $\varphi$ has no contracted divisors. 
	Thus (1) and (2) follow from Proposition~\ref{frac}. 
	Statement (3) follows from Proposition~\ref{branch}.
\end{proof}

As a corollary we obtain Keller's theorem.

\begin{cor}[Keller's theorem]
	Let $\varphi=(\overline f)$, $\Jph\in\K^\times$, and suppose $\Kof=\Kox$. Then~$\varphi$ is invertible.
\end{cor}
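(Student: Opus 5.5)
Since $\Kof=\Kox$, the map $\varphi$ is birational, and since $\Jph\in\K^\times$ it has no contracted divisors. The plan is to show $x_i\in\Kf$ for every $i$. Then $\Kx=\Kf$, so $\varphi^*:\Kf\to\Kx$ is an isomorphism of coordinate rings, and hence $\varphi$ is an automorphism of $\A^n$.

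\emph{Step 1.} Fix $i$. By hypothesis $x_i\in\Kox=\Kof$. Trivially $x_i\in\Kx$, so
\[
x_i\in\Kx\cap\Kof.
\]

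\emph{Step 2.} By Proposition~\ref{Kmap}(2), which rests on Proposition~\ref{frac}(2) and on the absence of contracted divisors, we have $\Kx\cap\Kof=\Kf$. Hence $x_i\in\Kf$, so $x_i=G_i(\overline{f})$ for some polynomial $G_i$. Doing this for every $i$ gives a polynomial map
\[
\psi=(G_1,\ldots,G_n).
\]

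\emph{Step 3.} Check that $\psi$ is a two-sided inverse. The identity $G_i(\overline{f})=x_i$ gives $\psi\circ\varphi=\mathrm{id}$ directly. For $\varphi\circ\psi$, note that
\[
f_j\bigl(G_1(\overline f),\ldots,G_n(\overline f)\bigr)=f_j(\overline x)
\]
as elements of $\Kf$. Since $f_1,\ldots,f_n$ are algebraically independent (Jacobian criterion), this is a polynomial identity $f_j(G_1(\overline y),\ldots,G_n(\overline y))=y_j$. Hence $\varphi\circ\psi=\mathrm{id}$.

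I expect no real obstacle: all the content lies in Proposition~\ref{frac}. The one point to state carefully is in Step 3, where algebraic independence of the $f_j$ is what allows an identity in $\Kf$ to be read as an identity of polynomials in the new variables $\overline y$.
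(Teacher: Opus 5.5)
Your proof is correct and follows the paper's argument: the paper also uses Proposition~\ref{Kmap}(2) to get $\Kf=\Kx\cap\Kof=\Kx\cap\Kox=\Kx$ and concludes that $\varphi^*$ is an isomorphism. Your Step~3, which builds the inverse $\psi$ explicitly using the algebraic independence of the $f_j$, simply spells out that last conclusion.
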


\begin{proof}
	By Proposition~\ref{Kmap}(2) we have
	\[
	\Kf=\Kx\cap\Kof=\Kx\cap\Kox=\Kx,
	\]
	hence $\varphi^*$ is an isomorphism.
\end{proof}

We now give an alternative proof of~\cite[Corollary~2.2]{BY}.

\begin{cor}
	The map $\varphi$ is a Keller map if and only if the dual map $\varphi^*$ sends square-free polynomials to square-free polynomials.
\end{cor}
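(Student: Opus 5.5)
We prove the two implications separately. Throughout, ``square-free polynomial'' means a nonconstant polynomial without repeated irreducible factors, and we assume $\varphi$ is dominant in the nontrivial direction (otherwise $\varphi^*$ has a kernel and the statement is degenerate).

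\emph{Keller $\Rightarrow$ square-free preserved.} Let $\Jph\in\K^\times$ and let $H=H_1\cdots H_k\in\Kf$ with pairwise coprime irreducible $H_i$.

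1. By Proposition~\ref{Kmap}(3), each $\varphi^*(H_i)$ is square-free. Here we use Proposition~\ref{branch}: an irreducible $h$ with $h^2\mid H_i(\overline{f})$ would have $H_i=\varphi_*(h)$, by uniqueness of the image polynomial, since $\V(h)$ is not contracted. Then $\V(h)$ would be a branching divisor and $\Jph$ would vanish on it, which is impossible.

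2. By Proposition~\ref{Kmap}(1), which rests on Proposition~\ref{frac}(1) and the absence of contracted divisors, the polynomials $\varphi^*(H_i)$ are pairwise coprime.

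3. Hence the product $\varphi^*(H)=\prod_i\varphi^*(H_i)$ is square-free.

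\emph{Square-free preserved $\Rightarrow$ Keller.} We argue by contraposition: assume $\Jph$ is not a nonzero constant.

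1. If $\Jph=0$, the $f_i$ are algebraically dependent. Take a nonzero $P$ with $P(\overline{f})=0$ and an irreducible factor $Q$ of $P$ with $Q(\overline{f})=0$. Then $Q$ is square-free but its image $0$ is not.

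2. Otherwise $\Jph$ has an irreducible factor $h$. Set $X=\V(h)$. By Theorem~\ref{classif}, $X$ is either a branching divisor or contracted.

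3. If $X$ is a branching divisor, then $H=\varphi_*(h)$ is irreducible, hence square-free, while $h^2\mid\varphi^*(H)$ by Definition~\ref{defcreas}. This violates the hypothesis.

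4. If $X$ is contracted, Proposition~\ref{gcd1} gives coprime irreducible $H_1,H_2\in\Kf$ with $h\mid\varphi^*(H_1)$ and $h\mid\varphi^*(H_2)$. Then $H_1H_2$ is square-free, but $h^2\mid\varphi^*(H_1H_2)$. This again violates the hypothesis.

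5. Therefore $\Jph$ has no irreducible factors, and being nonzero it lies in $\K^\times$.

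The main obstacle is the contracted case. One must produce a single square-free polynomial whose pullback is non-square-free, rather than merely observing that $\Jph$ vanishes on $X$. Proposition~\ref{gcd1} handles this by giving two coprime irreducibles whose product pulls back with $h^2$ as a factor.

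A secondary point is the use of uniqueness of $\varphi_*(h)$ in the forward direction. We verify it by noting that $\V(h)$ cannot be contracted when $\Jph$ is a nonzero constant, since contracted hypersurfaces lie in the zero locus of $\Jph$.
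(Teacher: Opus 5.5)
Your proof is correct and follows essentially the same route as the paper: Proposition~\ref{Kmap}(1),(3) for the forward direction, and for the converse Theorem~\ref{classif} combined with Proposition~\ref{gcd1} in the contracted case or Definition~\ref{defcreas} in the branching case. Your separate treatment of $\Jph=0$, via an irreducible relation $Q(\overline{f})=0$, is a small but genuine addition, since the paper's argument only handles nonconstant $\Jph$.
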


\begin{proof}
	Suppose $\Jph\in\K^\times$ and let $H\in\Kf$ be square-free. Write
	\[
	H=H_1\cdots H_m
	\]
	as a product of irreducible polynomials.
	
	By Proposition~\ref{Kmap}(3), each $\varphi^*(H_j)$ is square-free, and by (1) the polynomials $\varphi^*(H_j)$ and~$\varphi^*(H_k)$ are coprime for $j\ne k$.
	
	Now suppose $\Jph\notin\K$. Let $h$ be an irreducible divisor of $\Jph$. By Theorem~\ref{classif}, $h$ defines either a contraction or a branching divisor.
	
	If $h$ defines a contraction, then by Proposition~\ref{gcd1} there exist irreducible coprime polynomials~$H_1,H_2\in\Kf$ such that both $\varphi^*(H_1)$ and $\varphi^*(H_2)$ are divisible by $h$. Hence $\varphi^*(H_1H_2)$ is divisible by $h^2$.
	
	If $h$ defines branching, then $\varphi^*(\varphi_*(h))$ is divisible by $h^2$ by Definition~\ref{defcreas}.
\end{proof}

\section{Degree-Two Maps}\label{mofd2}

In this section we show that the Jacobian of a degree-two map without contracted divisors is an irreducible semi-invariant of the Galois involution.

\subsection{Galois Involution and The Fiber Product}

For a dominant polynomial map $\varphi:\A^n\to\A^n$, the field extension
\[
\K(\overline{f})\subset\K(\overline{x})
\]
is finite, and its degree coincides with the number of points in a general fiber of $\varphi$.

Here we consider maps of degree two. In this case the extension $\Kox/\Kof$ is Galois, and the Galois group
\[
\Gal(\Kox,\Kof)=\{id,\tau\}
\]
consists of two elements, where $\tau$ is a birational involution preserving $\Kof$. For the dual birational map we fix the notation $\theta=\tau^*$.

We record the following observation.

\begin{lem}\label{2fib}
	If $\varphi$ is a degree-two map, then for a general point $a_1$ we have
	\[
	\varphi^{-1}(\varphi(a_1))=\{a_1,a_2\},\qquad 
	\theta(a_1) = a_2.
	\]
\end{lem}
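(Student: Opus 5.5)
Two things have to be shown for a general point $a_1$: the fiber $\varphi^{-1}(\varphi(a_1))$ has exactly two points, and the point other than $a_1$ is $\theta(a_1)$. We treat them in that order.

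\textbf{Fiber size.} Since $\varphi$ is dominant of degree two, the standard fact recalled in the preliminaries (\cite[Proposition~7.16]{Harris}) says that a general point of the target has exactly $[\Kox:\Kof]=2$ preimages. The set of points of the source whose image is such a general point contains a dense open subset, since it is the preimage of a dense open set under a dominant map. So for general $a_1$ we may write $\varphi^{-1}(\varphi(a_1))=\{a_1,a_2\}$ with $a_1\neq a_2$.

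\textbf{Identifying the second point.} The birational involution $\theta=\tau^*$ is regular on a dense open $U\subseteq\A^n$. Because $\tau$ fixes $\Kof$ pointwise, we have $\tau(f_i)=f_i$ for every $i$, that is, $f_i\circ\theta=f_i$ as rational functions. This is an equality of regular functions on $U$, so $\varphi(\theta(a))=\varphi(a)$ for all $a\in U$. Hence $\theta(a_1)\in\{a_1,a_2\}$ for general $a_1$, and it remains to exclude $\theta(a_1)=a_1$ on a dense set.

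\textbf{Excluding fixed points.} The fixed locus $\{a\in U:\theta(a)=a\}$ is closed in $U$. If it contained a dense open set, then $\theta$ would be the identity on a dense open set, so $\theta=\mathrm{id}$ as a rational map. That forces $\tau=\mathrm{id}$ on $\Kox$, since each coordinate function satisfies $x_j\circ\theta=x_j$, contradicting $\tau\neq id$. So the fixed locus is a proper closed subset. Intersecting its complement with the open set from the first step, we get $\theta(a_1)=a_2$ for general $a_1$.

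The only step needing care is the last one: passing from ``$\tau$ is a nontrivial field automorphism'' to ``$\theta$ moves a general point''. The argument is that a rational map agreeing with the identity on a dense open set is the identity, which holds because the source is irreducible.
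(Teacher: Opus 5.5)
Your proof is correct and follows the same route as the paper: two preimages from $\deg\varphi=2$, then $\varphi\circ\theta=\varphi$ because $\tau$ fixes $\Kof$, then $\tau\neq id$ to conclude that $\theta(a_1)$ is the other point of the fiber. You also justify two steps the paper leaves implicit: that the source points with a two-point fiber contain a dense open set, and that the fixed locus of $\theta$ is a proper closed subset because $\tau\neq id$.
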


\begin{proof}
	A general point has exactly two preimages, hence
	\[
	\varphi^{-1}(\varphi(a_1))=\{a_1,a_2\}.
	\]
	
	Since $\tau$ preserves $\Kof$, we have $\tau(\overline{f})=\overline{f}$. Passing to the dual map $\theta$, we obtain $\varphi\circ\theta=\varphi$. Substituting $a_1$ gives
	\[
	\varphi(\theta(a_1))=\varphi(a_1).
	\]
	Since $\tau \ne id$, it follows that for a general point $\theta(a_1)$ is the second point in the fiber, hence
	\[
	\theta(a_1) = a_2.
	\]
\end{proof}

Note that $\theta$ being defined at a point $a$ does not imply, in
general, that
\[
\varphi^{-1}(\varphi(a))=\{a,\theta(a)\}.
\]

\begin{ex}
	Let $\varphi=(xy,y^2)$. This map is the composition of the birational map $(xy,y)$ and the map $(x,y^2)$, hence $\deg\varphi=2$. 
	
	The map $\theta=(-x,-y)$ preserves $(xy,y^2)$ and is therefore the Galois involution. In this case $\theta$ is defined on all points of $\A^2$, while
	\[
	\varphi^{-1}(\varphi(1,0))=\{(x,0)\}\neq\{(1,0),(-1,0)\}.
	\]
\end{ex}

Now let
\[
Z=\A^n\times_\varphi \A^n \subset \A^{2n}
\]
be the fiber product. It consists of pairs of points with the same image:
\[
\varphi(\overline{x}_1)=\varphi(\overline{x}_2).
\]

For a degree-two map we may consider the graph $\varGamma=\varGamma_\theta$ of the map $\theta$, consisting of points of the form $(a_1,\theta(a_1))$. By Lemma~\ref{2fib} we have
\[
\varphi(a_1)=\varphi(\theta(a_1))=\varphi(a_2),
\]
and therefore $\varGamma\subset Z$.

The variety $Z$ also contains the diagonal $\Delta$, consisting of points of the form~$(\overline{x},\overline{x})$. Note that $Z$ is defined by $n$ equations in the $2n$-dimensional space $\A^{2n}$, hence every irreducible component of $Z$ has dimension at least $n$.

We now describe the tangent space $T_Z$. Consider the defining equations~$\varphi(\overline{x}_1)=\varphi(\overline{x}_2)$. Writing them as
\[
f_j(\overline{x}_1)-f_j(\overline{x}_2)=0
\]
and taking partial derivatives we obtain
\begin{align*}
	\frac{\partial(f_j(\overline{x}_1)-f_j(\overline{x}_2))}{\partial x_{1,k}}
	&=\frac{\partial f_j(\overline{x}_1)}{\partial x_{1,k}},\\
	\frac{\partial(f_j(\overline{x}_1)-f_j(\overline{x}_2))}{\partial x_{2,k}}
	&=-\frac{\partial f_j(\overline{x}_2)}{\partial x_{2,k}}.
\end{align*}

Arranging these partial derivatives into rows, we obtain the normal matrix $N$ of size $n\times 2n$:
\[
N=
\begin{pmatrix}
	J(\varphi)(\overline{x}_1) & -J(\varphi)(\overline{x}_2)
\end{pmatrix}.
\]

For points $(a_1,a_2)$ such that $\Jph(a_j)\neq0$, the matrix $N$ has full rank, and hence the dimension of the tangent space at such points is at most $n$.

Since both $\varGamma$ and $\Delta$ have dimension $n$, they are irreducible components of $Z$. Let $Z'$ denote the union of the remaining components. Then
\[
Z=\Delta\cup\varGamma\cup Z'.
\]

Consider the projections $\pi_1,\pi_2$ given by
\[
\pi_j(a_1,a_2)=a_j.
\]

\begin{propos}\label{zdecomp}
	Let $X=\pi_j(Z')\subset\A^n$. Then
	\begin{enumerate}
		\item $\dim X\le n-1$;
		\item $X$ is contracted by the map $\varphi$.
	\end{enumerate}
\end{propos}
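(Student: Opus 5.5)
Let $W$ be an irreducible component of $Z'$ and set $X_W=\overline{\pi_j(W)}$; it suffices to prove (1) and (2) for each such $X_W$, since $X$ is a finite union of them. By the symmetry $(a_1,a_2)\mapsto(a_2,a_1)$ of $Z$, which preserves $\Delta$ and $\varGamma$ (because $\theta$ is an involution) and hence permutes the components of $Z'$, we may take $j=1$.

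For (1), I would argue by contradiction. Suppose $\pi_1(W)$ is dense in $\A^n$. Then $W$ contains a point $(a_1,a_2)$ with $a_1$ a general point of $\A^n$. By Lemma~\ref{2fib}, the fiber of $\varphi$ through $a_1$ is $\{a_1,\theta(a_1)\}$, so $a_2\in\{a_1,\theta(a_1)\}$ and $(a_1,a_2)\in\Delta\cup\varGamma$. Now let $U\subset\A^n$ be a dense open set on which Lemma~\ref{2fib} holds. Then $\pi_1^{-1}(U)\cap Z\subset\Delta\cup\varGamma$. Since $\pi_1^{-1}(U)\cap W$ is a nonempty open, hence dense, subset of $W$, this gives $W\subset\Delta\cup\varGamma$. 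As $W$ is irreducible, $W$ lies in $\Delta$ or in $\varGamma$. Both have dimension $n$ and $\dim W\ge n$, so $W$ equals $\Delta$ or $\varGamma$. This contradicts $W$ being one of the remaining components. Hence $\dim X_W\le n-1$.

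For (2), I compare dimensions. Every component $W$ of $Z$ has $\dim W\ge n$, since $Z$ is cut out by $n$ equations in $\A^{2n}$. The fibers of $\pi_1|_W$ over a point $a_1$ lie in $\{a_1\}\times\varphi^{-1}(\varphi(a_1))$. Consider the irreducible set $V=\overline{\varphi(X_W)}$; then $W\subset X_W\times\varphi^{-1}(V)$ and $\varphi(\pi_2(W))\subset V$, so $\varphi(\pi_1(W))=\varphi(\pi_2(W))$. Suppose, contrary to (2), that $\dim V=\dim X_W=d\le n-1$. The map $\psi: W\to V$, $(a_1,a_2)\mapsto\varphi(a_1)$, then has general fiber equal to a product of fibers of $\varphi|_{X_W}$ and $\varphi|_{\pi_2(W)}$. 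The first factor is finite by assumption. For the second, set $Y=\overline{\pi_2(W)}$; it has $\dim Y\le n-1$ by part (1) applied with $j=2$. Either $\varphi|_Y$ has finite general fibers, or $Y$ is contracted. In the first case $\dim W\le\dim V=d\le n-1<n$, a contradiction. So $Y$ is contracted. Running the same argument with the roles of $j=1,2$ exchanged shows that $X_W$ is contracted as well, contradicting the assumption.

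The main obstacle is making this fiber-dimension count rigorous. If $\varphi$ is finite-to-one on both $\overline{\pi_1(W)}$ and $\overline{\pi_2(W)}$, the theorem on dimension of fibers (in \cite{Shaf}) gives $\dim W\le\dim V\le n-1$, contradicting $\dim W\ge n$. So at least one projection closure is contracted. I would then use the swap symmetry, applied to the component $W'$ that is the image of $W$ under the swap, to conclude that both projections are contracted. More carefully: if $X_W$ were not contracted, then all fibers of $\psi$ over general $v\in V$ would be finite on the first factor. Hence $\dim W\le\dim V+\dim(\text{fiber of }\varphi|_Y)=\dim Y\le n-1$, again a contradiction. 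This last inequality is the cleanest route, and it handles $j=1$ directly without needing the symmetric case.
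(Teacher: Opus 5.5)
Your proof is correct. Part (1) is the paper's argument (over a general point of $\A^n$ every point of $Z$ lies on $\Delta$ or $\varGamma$), stated more carefully through an irreducible component $W$ and the density of $W\cap\pi_1^{-1}(U)$ in $W$.

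For part (2) you take a genuinely different route. The paper uses $\dim Z'\ge n>\dim X$ to get that every fiber of $\pi_1$ on $Z'$ is at least one-dimensional. Hence $Y=\pi_2(\pi_1^{-1}(a))$ is a positive-dimensional set collapsed to the point $\varphi(a)$, and the paper concludes via the differential criterion (Proposition~\ref{cont}) from $T_Y\subset T_X$. That step silently needs $\pi_2(Z')\subseteq\pi_1(Z')$ (the swap symmetry), and it needs such $Y$'s to pass through general points of each component of $X$.

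You instead bound the general fiber of $W\to V=\overline{\varphi(\pi_1(W))}$ by a product of fibers of $\varphi$ on $\overline{\pi_1(W)}$ and on $\overline{\pi_2(W)}$. This gives $n\le\dim W\le\dim\overline{\pi_1(W)}+\dim\overline{\pi_2(W)}-\dim V$. If $\overline{\pi_1(W)}$ were not contracted, this becomes $n\le\dim\overline{\pi_2(W)}\le n-1$, a contradiction. Your argument is purely a fiber-dimension count, works one component at a time, and uses neither tangent spaces nor the identification of the two projections; it only needs part (1) for both $j$. The paper's version, in exchange, exhibits the positive-dimensional subvarieties that $\varphi$ collapses, which is the more geometric picture.

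Two small points to clean up:
\begin{enumerate}
\item The fiber of $W\to V$ is only \emph{contained in} the product of the two fibers, not equal to it. Containment is all you need.
\item The sentence in your second paragraph, that exchanging the roles of $j=1,2$ shows $X_W$ is contracted, does not follow. The swap only yields ``if $Y$ is not contracted then $X_W$ is'', which says nothing once you know $Y$ is contracted. Drop that detour: your final inequality $\dim W\le\dim V+(\dim Y-\dim V)=\dim Y\le n-1$ already finishes the proof.
\end{enumerate}
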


\begin{proof}
	(1) For a general point $a \in \A^n$, its fiber $\pi_1^{-1}(a)$ consists of points of the form~$(a,b)$ such that $b \in \varphi^{-1}(\varphi(a))$. Moreover, if $a=b$ then $(a,b)\in\Delta$, while if~$a\neq b$ then $(a,b)\in\varGamma$. Hence for a general point the fiber $\pi_j^{-1}(a)$ contains no points of~$Z'$, and therefore the dimension of $X=\pi_j(Z')$ is at most $n-1$.
	
	(2) The dimension of $Z'$ is at least $n$, while $\dim X \le n-1$. Hence for a point~$a\in X$ the fiber $\pi_1^{-1}(a)$ is at least one-dimensional. Let
	\[
	Y=\pi_2(\pi_1^{-1}(a)).
	\]
	Then $Y$ is also at least one-dimensional and $\varphi(Y)=\varphi(a)$. Since $Y$ is contracted to a point, the differential $d\varphi$ is degenerate on the tangent space $T_Y$ at general points. Since $T_Y \subset T_X$, the differential is also degenerate on the tangent space to~$X$. Hence~$X$ is contracted by $\varphi$ by Corollary~\ref{cont}.
\end{proof}

\begin{cor}\label{dimcont}
	If $\varphi$ has no contracted divisors, then $\dim\pi_j(Z') < n-1$.
\end{cor}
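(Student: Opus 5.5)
By Proposition~\ref{zdecomp}(1) we already know that $\dim \pi_j(Z')\le n-1$, so the corollary reduces to excluding equality. I would argue by contradiction: suppose $\dim \pi_j(Z')=n-1$ for some $j$. Then at least one irreducible component $X_0$ of $X=\overline{\pi_j(Z')}$ is a hypersurface in $\A^n$. Here I pass to the Zariski closure, which does not change the dimension; $\pi_j(Z')$ is constructible by Chevalley's theorem, so its closure has the same dimension and finitely many components.

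The next step is to show that $X_0$ is contracted by $\varphi$, which would give a contracted divisor and contradict the hypothesis. Proposition~\ref{zdecomp}(2) says that $X$ is contracted, and by the paper's convention a reducible variety is contracted precisely when each of its irreducible components is contracted. So $X_0$ is contracted.

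To make this step robust, I would re-run the argument of Proposition~\ref{zdecomp}(2) directly on an irreducible component $Z'_0$ of $Z'$ with $\overline{\pi_1(Z'_0)}=X_0$. Since $\dim Z'_0\ge n$ and $\dim X_0=n-1$, the fibres of $\pi_1|_{Z'_0}$ over general points $a\in X_0$ have dimension at least $1$. Their images $Y_a=\pi_2(\pi_1^{-1}(a))$ are positive-dimensional subsets of the fibre $\varphi^{-1}(\varphi(a))$. It follows that $\varphi^{-1}(\varphi(X_0))$ contains the $(n-1)$-dimensional set swept out by these $Y_a$. One then needs $\dim\varphi(X_0)<n-1$. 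I would deduce this from the fibre-dimension theorem applied to $\varphi$ restricted to $W=\overline{\pi_2(Z'_0)}$: $W$ has positive-dimensional fibres over $\varphi(X_0)$, so $\dim \overline{\varphi(W)}<\dim W\le n-1$ (the last inequality is again Proposition~\ref{zdecomp}(1)). Moreover $\varphi(X_0)=\varphi(W)$ up to closure, since points of $Z'$ have equal images under $\varphi$ in both coordinates. Hence $\dim\varphi(X_0)<n-1=\dim X_0$, so $X_0$ is a contracted divisor.

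The main obstacle is exactly this second step, namely passing from ``the fibre $Y$ through $a$ is contracted'' to ``$X_0$ itself is contracted.'' The dimension count via $\varphi(X_0)=\varphi(W)$ and the fibre-dimension theorem on $W$ is what I would rely on, rather than the tangent-space inclusion $T_Y\subset T_X$. Once that is in place, the contradiction with the absence of contracted divisors gives $\dim\pi_j(Z')<n-1$. By the symmetry $(a_1,a_2)\mapsto(a_2,a_1)$ of $Z$, which preserves $\Delta$, $\varGamma$ and hence $Z'$, the case $j=2$ follows from $j=1$.
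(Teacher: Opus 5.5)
Your proof is correct and matches the paper, which states this corollary without proof as an immediate consequence of Proposition~\ref{zdecomp}: part~(1) gives $\dim\pi_j(Z')\le n-1$, and by part~(2) together with the convention on reducible varieties, a hypersurface component of $\overline{\pi_j(Z')}$ would be a contracted divisor. Your supplementary fibre-dimension argument is also valid and is a somewhat more careful substitute for the tangent-space step $T_Y\subset T_X$ in the paper's proof of Proposition~\ref{zdecomp}(2): it uses $\overline{\varphi(X_0)}=\overline{\varphi(W)}$ and $\dim W\le n-1$ to get $\dim\overline{\varphi(X_0)}\le n-2$. The only flaw is your aside that the $Y_a$ sweep out an $(n-1)$-dimensional set, which is unjustified, but you never use it.
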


\begin{ex}
	Consider the map
	\[
	\varphi:\A^2\to\A^2,\qquad \varphi(x,y)=(xy,y^2).
	\]
	This map is the composition of the birational isomorphism $(xy,y)$ and the degree-two map $(x,y^2)$, hence $\deg\varphi=2$.
	
	The variety $Z=\A^2\times_\varphi\A^2$ is defined by the equations
	\begin{align*}
		x_1y_1 &= x_2y_2,\\
		y_1^2 &= y_2^2 .
	\end{align*}
	
	Thus $Z$ is the union of three two-dimensional components:
	\[
	Z=\Delta\cup\varGamma\cup Z',
	\]
	where
	\[
	\Delta=\V(x_1-x_2,y_1-y_2),\qquad
	\varGamma=\V(x_1+x_2,y_1+y_2),\qquad
	Z'=\V(y_1,y_2).
	\]
	
	Under the projection $\pi_1$, the component $Z'$ maps to the line
	\[
	L=\{(x,0)\},
	\]
	which is contracted to the point $(0,0)$ by $\varphi$.
\end{ex}
To illustrate Corollary~\ref{dimcont} and the fact that the absence of contracted divisors does not imply the absence of the component $Z'$, we give another example.
\begin{ex}\label{4ex}
	Consider the map $\varphi:\A^4\to\A^4$ defined by
	\[
	\varphi
	\begin{pmatrix}
		x\\y\\z\\w
	\end{pmatrix}
	=
	\begin{pmatrix}
		wx^2+zy\\
		zx+wy\\
		z\\
		w
	\end{pmatrix}.
	\]
	
	We first show that this map has degree two. Consider the preimage of a point $(x_0,y_0,z_0,w_0)$ with $z_0\neq0$ and $w_0\neq0$:
	\[
	\begin{cases}
		wx^2+zy=x_0\\
		zx+wy=y_0\\
		z=z_0\\
		w=w_0
	\end{cases}
	\Longleftrightarrow
	\begin{cases}
		wx^2-\frac{z^2}{w}x=x_0-\frac{z}{w}y_0\\
		zx+wy=y_0\\
		z=z_0\\
		w=w_0
	\end{cases}
	\]
	
	The equation
	\[
	wx^2-\frac{z^2}{w}x-x_0+\frac{z}{w}y_0=0
	\]
	is quadratic in $x$ and has two distinct roots provided its discriminant
	\[
	D=\frac{z^4}{w^2}-4(zy_0-wx_0)
	\]
	is nonzero. Hence a general point has exactly two preimages. In particular, if $(z_0,w_0)\neq(0,0)$ then the point has at most two preimages, and hence $\varphi$ has no contracted divisors.
	
	At the same time the two-dimensional plane $\V(z,w)$ is contracted to the point $(0,0)$.
	
	The variety $Z$ contains the two-dimensional component
	\[
	Z'=\V(z_1,w_1,z_2,w_2).
	\]
	Its image under $\pi_1$ is the contracted plane $\V(z,w)$, which has dimension two.
\end{ex}

\subsection{Semi-invariants}

We begin by constructing an element $s \in \Kox$ such that $\tau(s) = -s$. 
For this, we take any $h \in \Kox \setminus \Kof$ and set
\[
s = h - \tau(h).
\]

\begin{lem}\label{semiinv1}
	Let $\varphi$ be a degree-two map. Then there exists $s \in \Kx$ such that $\tau(s) = -s$ and~$S(\overline{f}) = s^2 \in \Kf$ is square-free.
\end{lem}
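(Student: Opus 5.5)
The plan is to build an anti-invariant element from any element of $\Kox\setminus\Kof$, clear its denominator, and then divide out the square part of its square. Integral closedness of $\Kx$ keeps everything polynomial at each step. No assumption on contracted divisors is needed.

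\emph{Step 1: an anti-invariant rational function.} Since $\deg\varphi=2$, we have $\Kof\neq\Kox$, so some coordinate $x_i\notin\Kof$. Put $h=x_i$ and $s_0=h-\tau(h)\in\Kox$. Because $\tau$ is an involution, $\tau(s_0)=\tau(h)-h=-s_0$. Also $s_0\neq0$, since $\tau(h)=h$ would force $h\in\Kof$, the fixed field of the Galois group $\{id,\tau\}$. Then $\tau(s_0^2)=s_0^2$, so $s_0^2\in\Kof$.

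\emph{Step 2: clearing denominators.} Write $s_0^2=P(\overline{f})/Q(\overline{f})$ with $P,Q\in\Kf$, and set $s_1=s_0\,Q(\overline{f})$. Since $Q(\overline{f})$ is $\tau$-invariant, $\tau(s_1)=-s_1$. Moreover
\[
s_1^2=P(\overline{f})\,Q(\overline{f})\in\Kf\subset\Kx .
\]
Thus $s_1\in\Kox$ is a root of the monic polynomial $T^2-s_1^2$ over $\Kx$. Since $\Kx$ is a UFD, hence integrally closed, $s_1\in\Kx$.

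\emph{Step 3: removing the square part.} Let $S_1=PQ\in\Kf$; it is nonzero because $s_1\neq0$. Factor $S_1=A^2B$ in the UFD $\Kf$ with $B$ square-free; this is possible after absorbing each repeated irreducible factor in pairs into $A$. Set $s=s_1/A(\overline{f})\in\Kox$. Then $\tau(s)=-s$, since $A(\overline{f})$ is invariant, and $s^2=B(\overline{f})\in\Kx$. By the same integral-closedness argument, $s\in\Kx$. Hence $S=B$ is square-free in $\Kf$ and $S(\overline{f})=s^2$, as required.

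The only delicate points are, first, that $s\neq0$, which holds because $h\notin\Kof$. Second, the passage from "square is a polynomial" to "is a polynomial" in Steps 2 and 3, which is exactly the integral closedness of $\Kx$; this is the step I would check most carefully. For the later use of the lemma, note also that $B$ is nonconstant: otherwise $s$ would be a nonzero constant, contradicting $\tau(s)=-s$ in characteristic zero.
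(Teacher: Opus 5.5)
Your proof is correct and follows the same route as the paper: take the anti-invariant element $h-\tau(h)$, express its square as an element of $\K(\overline{f})$, and divide by the square part $S_0\in\K(\overline{f})$ of that square. Your appeal to the integral closedness of $\K[\overline{x}]$ to show that the resulting $s$ is a polynomial is a step the paper's proof leaves implicit.
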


\begin{proof}
	Take a nonzero $\widetilde{s} \in \Kox$ such that $\tau(\widetilde{s}) = -\widetilde{s}$ and write $\widetilde{S}(\overline{f}) = \widetilde{s}^2$. 
	
	Factor $\widetilde{S}$ as
	\[
	\widetilde{S} = S_1 \cdots S_m S_0^2,
	\]
	where $S_1,\ldots,S_m$ are pairwise coprime irreducible polynomials and $S_0 \in \Kof$. 
	Dividing $\widetilde{s}$ by $S_0$, we obtain an element $s = \widetilde{s}/S_0$ such that $s^2$ is square-free.
\end{proof}

\begin{cor}\label{creas}
	For a degree-two map we have $\Jph \notin \K^\times$.
\end{cor}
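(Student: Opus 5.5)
Suppose, for contradiction, that $\Jph\in\K^\times$. Then $\varphi$ has no contracted divisors (a contracted hypersurface would lie in the zero set of $\Jph$). By Proposition~\ref{Kmap}, two consequences are available: $\Kx\cap\Kof=\Kf$, and $\varphi^*$ sends square-free irreducible polynomials to square-free ones. Together with coprimality, the latter extends to all square-free polynomials, by the argument in the corollary after Proposition~\ref{Kmap}.

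Next, apply Lemma~\ref{semiinv1}. It gives $s\in\Kx$ with $\tau(s)=-s$ and $S(\overline f)=s^2$, where $S\in\Kf$ is square-free. Since $\tau(s)=-s\neq s$, we have $s\notin\Kof$, so $s$ is not constant. Because $\varphi$ is dominant, $S$ is not constant either: otherwise $s^2$, and hence $s$, would be constant.

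Now use the Keller hypothesis on $S$. Since $S$ is square-free, $\varphi^*(S)=S(\overline f)$ must be square-free. But $S(\overline f)=s^2$ with $s$ a nonconstant polynomial, so $\varphi^*(S)$ is divisible by the square of any irreducible factor of $s$. This is a contradiction.

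The only delicate point is passing from irreducible to arbitrary square-free $S$. Write $S=S_1\cdots S_m$ with the $S_i$ pairwise coprime irreducible. Each $S_i(\overline f)$ is square-free by Proposition~\ref{Kmap}(3). The factors $S_i(\overline f)$ and $S_j(\overline f)$ are pairwise coprime for $i\neq j$ by Proposition~\ref{frac}(1), which applies since there are no contracted divisors. Hence the product $S(\overline f)$ is square-free.

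A more direct route gives the same conclusion. Take an irreducible factor $h$ of $s$ and let $H$ be the irreducible factor of $S$ with $h\mid H(\overline f)$. Since $h$ divides no other $S_j(\overline f)$, we get $h^2\mid H(\overline f)$. So $\V(h)$ is a branching divisor, and $\Jph$ vanishes on it by Proposition~\ref{branch}, contradicting $\Jph\in\K^\times$.
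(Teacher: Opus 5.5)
Your proposal is correct and is essentially the paper's argument. The paper assumes there are no contracted divisors and takes an irreducible factor $s_0$ of some $S_j(\overline{f})$. It then uses that $S_j(\overline{f})$ is a square, which tacitly relies on the pairwise coprimality from Proposition~\ref{frac}(1) that you make explicit, to get $s_0^2\mid S_j(\overline{f})$, and concludes via Proposition~\ref{branch} that $s_0$ divides $\Jph$. This is exactly your ``more direct route''; note that it uses only the absence of contracted divisors, so it also gives the existence of a branching divisor, which is how the paper later invokes this corollary in Proposition~\ref{semiinv2}.
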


\begin{proof}
	Assume that $\varphi$ has no contracted divisors. Let $s_0$ be an irreducible divisor of $S_j(\overline{f})$. Then~$\varphi_*(s_0) = S_j$. 
	
	Since $S_j(\overline{f})$ is a square in $\Kx$, it is divisible by $s_0^2$, so $s_0$ defines a branching divisor. By Proposition~\ref{branch}, $s_0$ divides $\Jph$.
\end{proof}

We now study how branching is reflected in the action of $\tau$ on $\Kox$. 
From now on we assume that $\varphi$ has no contracted divisors.

\begin{lem}\label{deg}
	Let $h \in \Kx$. The following conditions are equivalent:
	\begin{enumerate}
		\item $\tau(h) = -h$;
		\item $h \notin \Kf$ and $h^2 \in \Kf$;
		\item $h \notin \Kof$ and there exists $m$ such that $h^m \in \Kof$.
	\end{enumerate}
\end{lem}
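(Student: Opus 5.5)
We prove the cycle (1)$\Rightarrow$(2)$\Rightarrow$(3)$\Rightarrow$(1). Throughout we use the standing assumptions: $\varphi$ has degree two, so $\Gal(\Kox,\Kof)=\{id,\tau\}$ and $\Kof$ is exactly the fixed field of $\tau$, and $\varphi$ has no contracted divisors, so Proposition~\ref{frac}(2) gives $\Kx\cap\Kof=\Kf$. We take $h\neq 0$ in (1), since otherwise (2) fails trivially.

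\textbf{(1)$\Rightarrow$(2).} Suppose $\tau(h)=-h$ with $h\ne0$. Since $\operatorname{char}\K=0$, we have $\tau(h)\neq h$, so $h\notin\Kof$ and in particular $h\notin\Kf$. On the other hand $\tau(h^2)=(-h)^2=h^2$, so $h^2$ lies in the fixed field $\Kof$. Since also $h^2\in\Kx$, Proposition~\ref{frac}(2) gives $h^2\in\Kx\cap\Kof=\Kf$.

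\textbf{(2)$\Rightarrow$(3).} Assume $h\notin\Kf$ and $h^2\in\Kf$. Again by Proposition~\ref{frac}(2), $h\in\Kx$ and $h\notin\Kf$ force $h\notin\Kof$. Taking $m=2$ finishes this step.

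\textbf{(3)$\Rightarrow$(1).} This is the main step. Let $m$ be minimal with $h^m\in\Kof$; then $m\ge2$. Apply $\tau$ to get $\tau(h)^m=\tau(h^m)=h^m$, so $\tau(h)=\zeta h$ for some $m$-th root of unity $\zeta\in\K$. This is legitimate because $\K$ is algebraically closed and the only elements of $\Kox$ satisfying $u^m=1$ are constants. Since $\tau^2=id$, we get $\zeta^2=1$, so $\zeta=\pm1$. If $\zeta=1$, then $h$ is $\tau$-fixed, hence $h\in\Kof$, contradicting the hypothesis. Therefore $\zeta=-1$ and $\tau(h)=-h$.

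The only subtle point is the claim that $\zeta$ is a constant. We check it by writing $\tau(h)/h=u\in\Kox$ with $u^m=1$. Then $u$ is algebraic over $\K$, and $\K$ is algebraically closed in $\Kox$, so $u\in\K$. The absence of contracted divisors enters only through Proposition~\ref{frac}(2), which is what moves us between $\Kf$ and $\Kof$.
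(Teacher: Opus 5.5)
Your proof is correct and follows essentially the same route as the paper's: for (1)$\Rightarrow$(2) you use $\tau$-invariance of $h^2$ together with Proposition~\ref{frac}(2); for (2)$\Rightarrow$(3) you take $m=2$; and for (3)$\Rightarrow$(1) you write $\tau(h)=\zeta h$ with $\zeta$ an $m$-th root of unity and cut down to $\zeta=-1$ using $\tau^2=id$ and $h\notin\Kof$. You also make explicit several points the paper leaves implicit: you exclude $h=0$ (without which the equivalence actually fails), you invoke Proposition~\ref{frac}(2) to pass from $h\notin\Kf$ to $h\notin\Kof$, and you check that $\tau(h)/h$ is a constant.
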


\begin{proof}
	If $\tau(h) = -h$, then $\tau(h)h = -h^2$ is invariant under $\tau$, hence $h^2 \in \Kof$, and therefore~$h^2 \in \Kf$ by Proposition~\ref{frac}.
	
	The implication $(2) \Rightarrow (3)$ is immediate with $m=2$.
	
	Finally, suppose $h^m \in \Kof$. Writing $\tau(h) = \widehat{h}$, we get $h^m = \widehat{h}^m$, so $h = \lambda \widehat{h}$ for some $m$-th root of unity $\lambda$. Since $\tau$ has order two, $\lambda = \pm 1$. Since $h \notin \Kof$,
	we conclude $\lambda = -1$.
\end{proof}

\begin{lem}\label{semiinv}
	Let $h \in \Kx \setminus \Kf$ be irreducible and assume that $h^2 \in \Kf$. Then
	\[
	\varphi^*(\varphi_*(h)) = h^2
	\qquad\text{and}\qquad
	\tau(h) = -h.
	\]
\end{lem}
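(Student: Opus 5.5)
The plan is to apply Lemma~\ref{deg} first and then identify the image polynomial directly. We work, as in the rest of the section, under the standing assumption that $\varphi$ has degree two and no contracted divisors.

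\textbf{The sign under $\tau$.} Since $h\in\Kx\setminus\Kf$ and $h^2\in\Kf$, condition~(2) of Lemma~\ref{deg} holds, so condition~(1) gives $\tau(h)=-h$.

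\textbf{The image polynomial.} Write $h^2=G(\overline{f})$ for some $G\in\Kf$.
\begin{itemize}
\item[(a)] $G$ is irreducible. Suppose $G=G_1G_2$ with both factors nonconstant. First assume $G_1,G_2$ are coprime. By Proposition~\ref{frac}(1), $G_1(\overline{f})$ and $G_2(\overline{f})$ are coprime. Their product is $h^2$ with $h$ irreducible, so by unique factorization one of them is a unit. Then $\varphi^*$ sends a nonconstant polynomial to a constant, which contradicts the injectivity of $\varphi^*$ (the map is dominant).
\item[(b)] Now let $P$ be an irreducible factor of $G$ and write $G=P^kQ$ with $\gcd(P,Q)=1$. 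If $Q$ is nonconstant, the coprime case above gives a contradiction. So $G=cP^k$. Then $P(\overline{f})^k=c^{-1}h^2$, hence $P(\overline{f})$ is a power of $h$ up to a constant, say $P(\overline{f})=\lambda h^j$. Since $P(\overline{f})$ is nonconstant, $j\ge1$, and $kj=2$.
\item[(c)] If $j=1$, then $h=\lambda^{-1}P(\overline{f})\in\Kf$, a contradiction. Hence $j=2$ and $k=1$, so $G=cP$ is irreducible.
\end{itemize}
Now $G$ is irreducible and $G(\overline{f})=h^2$ is divisible by $h$, so $G$ is an image polynomial of $h$. Since $\V(h)$ is not contracted, the Proposition following the definition of the image polynomial shows that $\varphi_*(h)$ is unique up to a scalar. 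Normalizing, $\varphi_*(h)=G$ and $\varphi^*(\varphi_*(h))=h^2$.

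The main obstacle is step~(a): getting irreducibility of $G$ from unique factorization together with Proposition~\ref{frac}(1). Here the hypothesis of no contracted divisors is essential. A secondary point is the normalization of $\varphi_*(h)$, which is only defined up to a constant. The identity $\varphi^*(\varphi_*(h))=h^2$ is to be read with $\varphi_*(h)$ chosen as $G$, the representative for which equality holds exactly.
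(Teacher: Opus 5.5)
Your proof is correct. Its key step is the same as the paper's: a nonconstant pullback dividing $h^2$ is $h$ or $h^2$ up to a scalar, and $h$ is excluded because $h\notin\Kf$. In both versions the sign $\tau(h)=-h$ comes from Lemma~\ref{deg}.

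The direction of the argument differs. The paper starts from $\varphi_*(h)$ and asserts that $\varphi^*(\varphi_*(h))$ divides $h^2$. This silently uses that $G$ (with $G(\overline{f})=h^2$) vanishes on $\overline{\varphi(\V(h))}=\V(\varphi_*(h))$, so $\varphi_*(h)\mid G$; that is where non-contraction of $\V(h)$ enters. You start from $G$, prove it irreducible, and then invoke uniqueness of the image polynomial. This makes the non-contraction input explicit, so in this respect your write-up is more complete than the paper's one-line proof.

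One correction to your commentary: step (a) and Proposition~\ref{frac}(1) are unnecessary. The no-contracted-divisors hypothesis is not what makes $G$ irreducible.
\begin{itemize}
\item If $P$ is any irreducible factor of $G$, then $\varphi^*(P)$ is a nonconstant divisor of $h^2$ (nonconstant by injectivity of $\varphi^*$).
\item Hence $\varphi^*(P)=\lambda h$ or $\varphi^*(P)=\lambda h^2$.
\item The first case contradicts $h\notin\Kf$.
\item The second gives $\varphi^*(G)=\lambda^{-1}\varphi^*(P)$, so $G=\lambda^{-1}P$ by injectivity.
\end{itemize}
This works for every dominant $\varphi$.

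The hypothesis is genuinely used in the uniqueness of $\varphi_*(h)$. For the degree-two map $\varphi=(xy^2,y^2)$ and $h=y$, the line $\V(y)$ is contracted and $f_1$ is an admissible image polynomial, yet $\varphi^*(f_1)=xy^2\neq h^2$. The hypothesis is also used inside Lemma~\ref{deg}, via Proposition~\ref{frac}(2), to pass from $h\notin\Kf$ to $h\notin\Kof$.
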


\begin{proof}
	Let $H = \varphi^*(\varphi_*(h))$. Then $H$ divides $h^2$. Since $h \notin \Kf$, we must have $H = h^2$. The relation~$\tau(h) = -h$ follows from Lemma~\ref{deg}.
\end{proof}

We now view polynomials from $\Kx$ as functions on the variety $\varGamma$ via the projection $\pi_1$. Then $\theta$ acts on $\varGamma$ by
\[
\theta(a_1,a_2) = (a_2,a_1).
\]

\begin{lem}\label{invsemiinv}
	Let $g \in \Kx \cap \K[\varGamma]^\times$ be irreducible. Then either $g \in \Kf$ or $\tau(g) = -g$.
\end{lem}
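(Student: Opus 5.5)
We consider $g$ and its conjugate $\tau(g)$ as functions on $\varGamma$. Since $\theta$ swaps the two coordinates on $\varGamma$, the function $\tau(g)$ pulled back via $\pi_1$ is $g$ pulled back via $\pi_2$, i.e. $\tau(g)=g(\overline{x}_2)$ on $\varGamma$. Because $g\in\K[\varGamma]^\times$, also $\tau(g)$ is a unit on $\varGamma$. So the quotient $u=\tau(g)/g$ is a unit on $\varGamma$, and in particular it has no zeros or poles on $\varGamma$. The plan is to show that $u$ is in fact a constant $\lambda$, then use $\tau^2=id$ to get $\lambda^2=1$. This gives $\tau(g)=\pm g$; the sign $+$ means $g\in\Kof\cap\Kx=\Kf$ by Proposition~\ref{frac}(2), and the sign $-$ is the second alternative.

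To see that $u$ is constant, we transfer the problem back to $\Kox$. Write $\tau(g)=p/q$ with $p,q\in\Kx$ coprime. Then $u=p/(qg)$ in $\Kox$. Identifying $\varGamma$ birationally with $\A^n$ via $\pi_1$, the function $u$ is regular and invertible on $\varGamma$. I want to deduce that $u$, as a rational function on $\A^n$, has no zeros or poles along any hypersurface of $\A^n$; then $u\in\Kx^\times=\K^\times$.

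The key point is that $\pi_1:\varGamma\to\A^n$ is birational and, since $\varphi$ has no contracted divisors, it is an isomorphism over the complement of a set of codimension at least two in $\A^n$. Indeed, the locus where $\theta$ is not regular, or where $\pi_1^{-1}$ fails to be finite, should be controlled by Corollary~\ref{dimcont} and Proposition~\ref{zdecomp}: the fibres of $\pi_1$ on $Z$ over points off $\pi_1(Z')$ consist only of points of $\Delta\cup\varGamma$. Hence $\varGamma\to\A^n$ is quasi-finite, and therefore an isomorphism, over a big open set $U$ with $\operatorname{codim}(\A^n\setminus U)\ge 2$. This uses Zariski's main theorem together with the normality of $\A^n$. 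Then $u$ is a unit on $U$, so its divisor on $\A^n$ is zero. Since $\Kx$ is a UFD, $u$ is a nonzero constant $\lambda$.

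Finally, applying $\tau$ to $\tau(g)=\lambda g$ gives $g=\lambda^2 g$, so $\lambda=\pm1$.

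The main obstacle is justifying that $\pi_1|_{\varGamma}$ is an isomorphism in codimension one. We need that no divisor of $\A^n$ has its preimage in $\varGamma$ contracted, or blown up. A divisor $D\subset\A^n$ over which $\pi_1|_{\varGamma}$ has positive-dimensional fibres would give $\varphi$ contracting the corresponding curves $\pi_2(\pi_1^{-1}(a))$ for $a\in D$, as in the proof of Proposition~\ref{zdecomp}(2), which makes $D$ contracted. Failing that, I would argue more directly via valuations: for an irreducible $h\in\Kx$, the valuation $v_h$ is centred on $\varGamma$ at a divisor. The unit $u$ then has $v_h(u)=0$, because the centre of $v_h$ on $\varGamma$ is a divisor that meets where $u$ is regular and invertible.
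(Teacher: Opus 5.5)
Your reduction to showing that $u=\tau(g)/g$ is constant is fine. The step that is supposed to make $u$ constant, however, is false.

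Zariski's main theorem, applied to the birational, quasi-finite morphism $\pi_1|_{\varGamma}$ over a big open set $U$, only gives an open immersion $\pi_1^{-1}(U)\cap\varGamma\hookrightarrow U$. Nothing makes it surjective, and its image can miss whole hypersurfaces, namely those along which $\theta$ has poles. If $\pi_1|_{\varGamma}$ were an isomorphism over $U$ with $\operatorname{codim}(\A^n\setminus U)\ge 2$, then $\theta$ would be regular on $U$. Since $\Kx$ is a UFD, $\theta$ would then be a polynomial map. This fails for the map of Example~\ref{4ex}: it has degree two and no contracted divisors, yet $\tau(x)=-x+z^2/w^2$. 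There $\pi_1(\varGamma)$ misses $\V(w)\setminus\V(z,w)$.

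Your valuation fallback breaks for the same reason. We have $v_w(\tau(x))=-2$, while $\tau(x)$ corresponds on $\varGamma$ to the coordinate function $x\circ\pi_2$, which is regular there. So $v_w$ has no centre on $\varGamma$ at all. Your ``main obstacle'' paragraph only rules out divisors over which the fibres of $\pi_1|_{\varGamma}$ are positive-dimensional. The real problem is divisors over which those fibres are empty.

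This is not a peripheral issue: the hypothesis of the lemma puts you in exactly that situation at the one divisor that matters. Since $g\circ\pi_1$ is a unit on the affine variety $\varGamma$, it has no zeros there, i.e.\ $\pi_1(\varGamma)\cap\V(g)=\emptyset$. So your claimed isomorphism in codimension one would make the lemma vacuous. Your argument gives no information about $v_g(u)=v_g(\tau(g))-1$, nor about irreducible factors of $\tau(g)$ other than $g$.

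What is missing is control of $\varphi^*(\varphi_*(g))=g g_0$. Equivalently, you need to know whether $\tau$ fixes the valuation $v_g$ or swaps it with $v_{h'}$ for another irreducible factor $h'$ of $\varphi^*(\varphi_*(g))$. The paper's argument supplies this via the fibre product:
\begin{enumerate}
\item Because no divisor is contracted, a general point of any component of $\V(g_0)$ shares its $\varphi$-fibre with a point of $\V(g)$.
\item The resulting pair lies in $Z$, but not on $\varGamma$ (where $g$ is a unit) and not on $Z'$ (Corollary~\ref{dimcont}).
\item Hence the pair lies on $\Delta$, so $g_0$ is a constant times a power of $g$.
\item Lemma~\ref{deg} then gives $g\in\Kf$ or $\tau(g)=-g$.
\end{enumerate}
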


\begin{proof}
	Let $G = \varphi_*(g)$ and write $G(\overline{f}) = g \cdot g_0$. 
	
	The images of $\V(g)$ and $\V(g_0)$ both contain open subsets of $\V(G)$, since there are no contracted divisors. Hence for a general point $a \in \V(g)$ there exists $a_0 \in \V(g_0)$ such that $\varphi(a) = \varphi(a_0)$, so~$(a,a_0) \in Z$.
	
	Since $g$ is invertible on $\varGamma$, such points cannot lie on $\varGamma$. Excluding the lower-dimensional part coming from $Z'$, we conclude that generically $(a,a_0)$ lies on the diagonal. Therefore $g$ and $g_0$ have the same irreducible divisors, and the claim follows from Lemma~\ref{deg}.
\end{proof}

\begin{cor}\label{invneirr}
	Let $g \in \K[\varGamma]^\times$. Then either $g \in \Kof$ or $\tau(g) = -g$.
\end{cor}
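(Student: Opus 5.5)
Since $g$ is a unit of $\K[\varGamma]$, the plan is to write it as a quotient of elements of $\Kx$, factor both numerator and denominator into irreducibles, and apply Lemma~\ref{invsemiinv} to each factor. The standing assumption that $\varphi$ has no contracted divisors stays in force. We regard $\K[\varGamma]$ as a subring of $\Kox$ via $\pi_1$; this works because $\pi_1|_{\varGamma}$ is birational onto $\A^n$, as $\varGamma$ is the graph of $\theta$. The aim is to write $g=u/v$ with $u,v\in\Kx$ coprime, and to show that every irreducible factor of $u$ and of $v$ is itself invertible on $\varGamma$.

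\emph{Step 1 (reduction to polynomial units).} Write $g=u/v$ in lowest terms in $\Kox$. Then $ug^{-1}=v$ and $vg=u$, so whenever $g,g^{-1}\in\K[\varGamma]$ we get $u\in v\K[\varGamma]$ and $v\in u\K[\varGamma]$. We want to conclude that $u$ and $v$ are units of $\K[\varGamma]$, which would require the ideals $(u)$ and $(v)$ of $\K[\varGamma]$ to be trivial. This does not follow from the divisibilities alone, so a more robust route is needed. Note that $\K[\varGamma]$ contains $\Kx$ and also the coordinates of $\theta$. Since $\theta$ is an involution, $\K[\varGamma]$ is the ring generated by $\Kx$ and $\tau(\Kx)$, which is $\tau$-stable. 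Hence the norm $N(g)=g\,\tau(g)$ is a $\tau$-invariant unit, so $N(g)\in\Kof$. Moreover $N(g)$ is integral over $\Kf$ and its inverse is too, since $\K[\varGamma]$ is generated by $\Kx$ and $\tau(\Kx)$, both integral over... This last claim fails in general, because $\Kx$ need not be integral over $\Kf$. The safer approach is therefore to work factor by factor: for each irreducible $p\in\Kx$ dividing $u$ or $v$, we show $p\in\K[\varGamma]^\times$. Since $\V(g)\cap\varGamma=\emptyset$, the zero locus of $p$ pulled back by $\pi_1$ meets $\varGamma$ only where it cancels against $v$. Coprimality of $u$ and $v$, together with $\dim\pi_1(\varGamma\setminus\pi_1^{-1}(\text{regular locus}))$ being small, should force $\pi_1^{-1}(\V(p))\cap\varGamma$ to be empty, up to a set of codimension $\geq 2$.

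\emph{Step 2 (apply Lemma~\ref{invsemiinv}).} Each irreducible factor $p$ is then either in $\Kf$ or satisfies $\tau(p)=-p$. Writing $g=c\prod p_i^{e_i}$ with $e_i\in\mathbb{Z}$, we get $\tau(g)=(-1)^{\sum_{\tau p_i=-p_i}e_i}g$. Therefore $\tau(g)=\pm g$, and $\tau(g)=g$ means exactly $g\in\Kof$.

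The main obstacle is Step 1, namely proving that each irreducible factor of the numerator and denominator of $g$ is invertible on $\varGamma$. I would argue via divisors on $\varGamma$. Pulled back to $\varGamma$, the divisor of $g$ is trivial. The divisors $\pi_1^*\V(p)$ for distinct irreducible $p$ are distinct away from the exceptional locus of $\pi_1|_\varGamma$. By Corollary~\ref{dimcont} this locus has small dimension, because there are no contracted divisors, so no cancellation between distinct factors can occur. Hence each $\pi_1^*\V(p)\cap\varGamma$ must be empty.
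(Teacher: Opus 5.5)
Your Step~2 is exactly the paper's proof. The paper writes $g$ as a quotient of polynomials and applies Lemma~\ref{invsemiinv} to each irreducible factor without further comment. You are right that this needs information about the factors on $\varGamma$, but the argument you give in Step~1 does not supply it.

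Corollary~\ref{dimcont} bounds $\pi_j(Z')$, where $Z'$ consists of the components of $Z$ other than $\Delta$ and $\varGamma$. It says nothing about the locus where $\pi_1|_{\varGamma}$ fails to be a local isomorphism. That locus can be a divisor of $\varGamma$ even when $\varphi$ has no contracted divisors. In Example~\ref{4ex} one has $\tau(x)=-x+z^2/w^2$, and $\varGamma$ contains the three-dimensional family of limit points $\bigl((x,y,0,0),(-x+c^2,\,y+2cx-c^3,0,0)\bigr)$ lying over the plane $\V(z,w)$.

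Moreover, the sets you want to show are empty lie entirely inside this locus. Suppose an irreducible factor $p$ of $u$ had $\V(p)$ meeting the domain of $\theta$. Choose $a$ there with $v(a)\neq 0$ and evaluate $u=gv$ at $(a,\theta(a))\in\varGamma$; this gives $g(a,\theta(a))=0$, which is impossible. So ``distinct away from the exceptional locus'' gives no information exactly where cancellation could occur. The smallness of the image in $\A^n$ does not help either: a nonempty $\pi_1^{-1}(\V(p))\cap\varGamma$ automatically has codimension one in $\varGamma$ by Krull, and it can lie over a codimension-two subset of $\A^n$, as $\V_\varGamma(w)$ does above. The claim that each factor is a unit on $\varGamma$ therefore remains unproved.

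The missing observation is that you do not need the factors to be units. The proof of Lemma~\ref{invsemiinv} uses invertibility only to ensure that no point of $\varGamma$ lies over a general point of $\V(p)$, and this follows directly from your setup:
\begin{itemize}
\item If $p\mid u$ and $a\in\V(p)$ is general, then $v(a)\neq 0$ because $u,v$ are coprime. If $(a,b)\in\varGamma$, evaluating the identity $u=gv$ in $\K[\varGamma]$ gives $g(a,b)=0$, contradicting $g\in\K[\varGamma]^\times$.
\item If $p\mid v$, use $v=g^{-1}u$ in the same way.
\end{itemize}
With this, the argument of Lemma~\ref{invsemiinv} goes through for each $p$. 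Write $\varphi^*(\varphi_*(p))=pp_0$. Points of $\V(p_0)$ in the same fiber as a general $a\in\V(p)$ give points of $Z$ that avoid $\varGamma$ by the above and avoid $Z'$ by Corollary~\ref{dimcont}, so they lie on $\Delta$. This yields $p\in\Kf$ or $\tau(p)=-p$, and your Step~2 finishes the proof.
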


\begin{proof}
	Write $g$ as a quotient of two polynomials and apply Lemma~\ref{invsemiinv} to each irreducible factor.
\end{proof}

\begin{lem}\label{denom}
	Let $h \in \Kx$ and suppose
	\[
	\tau(h) = \frac{p}{r}
	\]
	is a reduced fraction. Then $r \in \Kf$.
\end{lem}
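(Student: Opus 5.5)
The idea is to use the trace of $h$ over $\Kof$. Since $\tau$ is the nontrivial element of $\Gal(\Kox,\Kof)$, the element $h+\tau(h)$ is fixed by $\tau$ and therefore lies in $\Kof$. We then show that, written as a fraction of polynomials, this element has denominator exactly $r$ (up to a constant). Proposition~\ref{frac}(3) then forces $r\in\Kf$. Throughout we use the standing assumption of this subsection that $\varphi$ has degree two and no contracted divisors, which is what Proposition~\ref{frac} requires.

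\emph{Step 1.} Write
\[
h+\tau(h)=h+\frac{p}{r}=\frac{hr+p}{r}.
\]
The left-hand side is $\tau$-invariant, because $\tau^2=id$. Hence it belongs to the fixed field of $\{id,\tau\}$, which is $\Kof$.

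\emph{Step 2.} We check that the fraction $\frac{hr+p}{r}$ is reduced in $\Kx$. Let $d$ be any common divisor of $hr+p$ and $r$. Then $d$ divides $(hr+p)-h\cdot r=p$, so $d$ divides both $p$ and $r$. Since $\frac{p}{r}$ is reduced, $d$ is a constant. Here it is essential that $h$ is a polynomial, so that $hr+p\in\Kx$.

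\emph{Step 3.} We now have relatively prime polynomials $hr+p$ and $r$ in $\Kx$ whose quotient lies in $\Kof$. Proposition~\ref{frac}(3) applies and gives $hr+p\in\Kf$ and $r\in\Kf$. This is the claim.

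The only point requiring care is the coprimality in Step~2, which is elementary. The substantive input is Proposition~\ref{frac}(3), and that is exactly where the absence of contracted divisors is used. Example~\ref{exxy} shows that without this assumption the numerator and denominator of an element of $\Kof$ need not lie in $\Kf$.

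As a by-product, applying the same reasoning to the norm $h\tau(h)=\frac{hp}{r}$ would not directly give more, since $\gcd(hp,r)$ need not be trivial. The trace alone suffices.
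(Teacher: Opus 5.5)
Your proof is correct and matches the paper's argument. Both use that the trace $h+\tau(h)=\frac{rh+p}{r}$ lies in $\Kof$, check that this fraction is reduced (a common factor with $r$ would divide $p$), and then apply Proposition~\ref{frac}(3), under the standing no-contracted-divisors assumption, to conclude $r\in\Kf$.
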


\begin{proof}
	Consider the sum
	\[
	h + \tau(h) = \frac{rh + p}{r}.
	\]
	Let $d$ be an irreducible divisor of $r$. Then $d$ does not divide $p$, and therefore does not divide the numerator $rh + p$. Since $h + \tau(h) \in \Kof$, it follows from Proposition~\ref{frac} that $r \in \Kf$.
\end{proof}

\begin{lem}\label{lemirr}
	Let $h_1 \in \Kx \setminus \K[\varGamma]^\times$ be irreducible. Then
	\[
	\tau(h_1) = gh_2,
	\]
	where $h_2$ is irreducible and noninvertible on $\varGamma$, and $g \in \K[\varGamma]^\times \cap \Kof$.
\end{lem}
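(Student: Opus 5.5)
Write $\tau(h_1)=p/r$ as a reduced fraction with $p,r\in\Kx$. By Lemma~\ref{denom}, the denominator satisfies $r\in\Kf$. In particular $r$ is $\tau$-invariant.

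The plan is to factor $p$ into irreducibles and show that exactly one factor is not invertible on $\varGamma$, with all other factors absorbed into a unit of $\K[\varGamma]$ lying in $\Kof$. Write $p=c\,q_1\cdots q_k$ with $q_i$ irreducible, and split the factors into those invertible on $\varGamma$ and those that are not.

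\textbf{Step 1: $\tau$ carries invertibility on $\varGamma$ to invertibility.} The involution $\theta$ acts on $\varGamma$ by swapping coordinates, so it is an automorphism of $\varGamma$. Hence $\tau$ preserves $\K[\varGamma]$ and $\K[\varGamma]^\times$. Since $h_1\notin\K[\varGamma]^\times$, we get $\tau(h_1)\notin\K[\varGamma]^\times$. As $\varGamma\cong$ (an open part of) $\A^n$ via $\pi_1$, a polynomial is invertible on $\varGamma$ exactly when its zero set misses $\pi_1(\varGamma)$.

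\textbf{Step 2: $r$ is a unit on $\varGamma$.} This should follow because $r\in\Kf$ is nonzero and is a factor of $\varphi^*$ of something nonvanishing generically. More precisely, I would argue that $h_1=\tau(p)/\tau(r)=\tau(p)/r$ is a polynomial, so $r$ divides $\tau(p)$ in $\K[\varGamma]$. Alternatively, I may simply keep $r$ as part of $g$ if it is a unit, and otherwise treat its factors like those of $p$.

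\textbf{Step 3: the key count.} Apply $\tau$ again: $h_1=\tau(p)/r=c\,\tau(q_1)\cdots\tau(q_k)/r$. Every factor $q_i$ invertible on $\varGamma$ is, by Lemma~\ref{invsemiinv}, either in $\Kf$ or anti-invariant, so $\tau(q_i)=\pm q_i$. Now $h_1$ is irreducible and noninvertible on $\varGamma$. The noninvertible factors $q_i$ have $\tau(q_i)$ noninvertible, by Step 1. So if there were two noninvertible factors, $h_1$ would be a product of two noninvertible elements of $\K[\varGamma]$ times units. This has to be contradicted using irreducibility of $h_1$ in $\Kx$, passing through the divisor picture on $\A^n$ via $\pi_1$, and that is the main obstacle. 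The tool I would use is that for a noninvertible $q$, the divisor $\V(q)$ meets $\pi_1(\varGamma)$, so $\theta$ transports divisors to divisors there, and distinct divisors stay distinct. The divisor of $h_1$ on $\pi_1(\varGamma)$ is a single prime, so its $\theta$-transform, the divisor of $\tau(h_1)$ restricted there, is a single prime as well. Hence exactly one $q_i=:h_2$ is noninvertible, and it is irreducible.

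\textbf{Step 4: the unit lies in $\Kof$.} Set $g=\tau(h_1)/h_2$. Then $g$ is a unit on $\varGamma$, so by Corollary~\ref{invneirr} either $g\in\Kof$ or $\tau(g)=-g$. In the anti-invariant case I would replace $g$ by $g\,s$ and $h_2$ by $h_2/s$, using the semi-invariant $s$ of Lemma~\ref{semiinv1} if it is a unit on $\varGamma$. A cleaner route is to rule this case out by a degree or parity comparison via $h_1=\tau(g)\tau(h_2)$. I expect this sign issue, together with Step 3, to need the most care.
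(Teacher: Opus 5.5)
Your Step~2 fails, and it cannot be repaired. The reduced denominator $r$ of $\tau(h_1)$ need not be a unit on $\varGamma$. In fact, with $\K[\varGamma]$ the coordinate ring of the closed graph, the statement itself is false.

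Take the map of Example~\ref{4ex}, which has degree two and no contracted divisors, and take $h_1=x$. It is irreducible and not a unit on $\varGamma$, since $\V(x)$ meets $U=\{w\neq 0\}$. Here $\tau(x)=(z^2-w^2x)/w^2$, so $r=w^2$.
\begin{itemize}
\item Over $U$ the map $\theta$ is a morphism and $\varGamma\cap(U\times\A^4)\cong U$. Hence any $g\in\K[\varGamma]^\times$ equals $c\,w^m$ with $c\in\K^\times$, $m\in\mathbb{Z}$.
\item Since $z^2-w^2x$ is irreducible and prime to $w$, the quotient $\tau(x)/g$ is an irreducible polynomial only for $m=-2$.
\item The curve $t\mapsto\bigl((x_0,y_0,\lambda t,t),(\lambda^2-x_0,\,y_0+2\lambda x_0-\lambda^3,\,\lambda t,\,t)\bigr)$ lies in $\varGamma$ for $t\neq0$, hence also at $t=0$. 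So $w$ vanishes at points of $\varGamma$, and $w^{-2}\notin\K[\varGamma]$.
\end{itemize}
So a noninvertible factor $w$ genuinely sits in the denominator. Your fallback ``otherwise treat its factors like those of $p$'' cannot absorb this into the form $gh_2$. Note that the conclusion of Lemma~\ref{neib}, $\varphi^*(\varphi_*(x))=x(w^2x-z^2)$, does hold here; only the requirement $g\in\K[\varGamma]^\times$ fails.

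The underlying error is the identification of $\varGamma$ with an open part of $\A^n$ in your Steps~1 and~3. The map $\pi_1|_{\varGamma}$ is birational but not an open immersion: here it collapses a curve over each point of the plane $\V(z,w)$. Two consequences follow. First, a regular function $p/r$ on $\varGamma$ may have $r$ vanishing on $\varGamma$. Second, irreducible polynomials need not give prime divisors on $\varGamma$: the divisor of the irreducible $z^2-w^2x$ on $\varGamma$ contains that exceptional set. This also undermines your divisor-transport count in Step~3.

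The paper's proof asserts your Step~2 in one line (``all noninvertible factors of $\tau(h_1)$ must appear in the numerator'') and has the same gap. So neither argument establishes the lemma as stated.

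Your Step~4 is a separate open point:
\begin{itemize}
\item Corollary~\ref{invneirr} leaves $\tau(g)=-g$ possible.
\item Replacing $h_2$ by $h_2/s$ does not yield a polynomial.
\item The paper's appeal to Lemma~\ref{denom} controls only $r$, not the invertible factors of $p$.
\end{itemize}
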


\begin{proof}
	The function $h_1$ vanishes on $\varGamma$ but has no poles there, and the same holds for~$\tau(h_1)$. Therefore all noninvertible factors of $\tau(h_1)$ must appear in the numerator.
	
	Separating invertible and noninvertible factors, we obtain a decomposition of the required form. Irreducibility of $h_2$ follows from applying $\tau$ and comparing factors. That $g \in \Kf$ follows from Lemma~\ref{denom}.
\end{proof}

\begin{lem}\label{neib}
	Let $h_1 \in \Kx \setminus \K[\varGamma]^\times$ be irreducible and assume $h_1 \notin \Kf$. Then
	\[
	\varphi^*(\varphi_*(h_1)) = h_1 h_2,
	\]
	where $h_2$ is as in Lemma~\ref{lemirr}.
\end{lem}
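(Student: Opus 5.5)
We want to show that for an irreducible $h_1\notin\Kf$ that vanishes somewhere on $\varGamma$, the pullback of its image polynomial is exactly $h_1h_2$, with $h_2$ as in Lemma~\ref{lemirr}. The natural candidate is the norm $h_1\tau(h_1)$, so the plan is: form the norm, show it is (essentially) a polynomial in $\overline{f}$ divisible by $h_1$, and then pin down its factorization.

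\textbf{Step 1: the norm.} By Lemma~\ref{lemirr} we have $\tau(h_1)=gh_2$ with $g\in\K[\varGamma]^\times\cap\Kof$. Hence
\[
N:=h_1\tau(h_1)=g\,h_1h_2 .
\]
Since $N$ is $\tau$-invariant, $N\in\Kof$, and therefore $h_1h_2=N/g\in\Kof$. As $h_1h_2$ is a polynomial, Proposition~\ref{frac}(2) gives $h_1h_2\in\Kf$. So $h_1h_2=P(\overline{f})$ for some $P\in\Kf$, and $h_1\mid P(\overline{f})$.

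\textbf{Step 2: $P$ is irreducible.} Suppose $P=P_1P_2$ with both factors nonconstant. By Proposition~\ref{frac}(1) the polynomials $P_1(\overline{f})$ and $P_2(\overline{f})$ are nonconstant (for $\varphi$ dominant) and coprime, so their product is $h_1h_2$ with $h_1,h_2$ irreducible. This forces, up to constants, $P_1(\overline{f})=h_1$ and $P_2(\overline{f})=h_2$. In particular $h_1\in\Kf$, contradicting the hypothesis. Therefore $P$ is irreducible and divisible-by-$h_1$ after pullback, so $P=\varphi_*(h_1)$ up to a scalar. Uniqueness of $\varphi_*(h_1)$ holds because there are no contracted divisors, which gives
\[
\varphi^*(\varphi_*(h_1))=h_1h_2
\]
up to a nonzero constant, which can be absorbed into $g$.

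\textbf{Main obstacle.} The delicate point is Step 1's reliance on Lemma~\ref{lemirr}: we need $g$ to genuinely lie in $\Kof$ (not merely be invertible on $\varGamma$) so that $N/g$ is again $\tau$-invariant. If only $g\in\K[\varGamma]^\times$ were available, I would instead invoke Corollary~\ref{invneirr}, which says either $g\in\Kof$ or $\tau(g)=-g$. In the second case, $h_1h_2$ would be anti-invariant, and Lemma~\ref{deg} would then give $(h_1h_2)^2\in\Kf$. Applying Proposition~\ref{frac}(1) to the factorization of that square should then still force $h_1h_2$ to equal an irreducible pullback times a constant, or else lead to a contradiction with $h_1\notin\Kf$.
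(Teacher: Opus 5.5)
Your proof is correct, and its first half is the paper's: the paper also writes $\tau(h_1)=gh_2$ and applies $\tau$ again to see that $h_1h_2$ is invariant. Like your norm computation, this uses $\tau(g)=g$ from Lemma~\ref{lemirr}. It then concludes $h_1h_2\in\Kf$ by Proposition~\ref{frac}.

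The second half takes a genuinely different route. The paper argues geometrically: the images of $\V(h_1)$ and $\V(h_2)$ coincide, their common image is $\V(H)$ with $H$ irreducible, and comparing multiplicities gives $\varphi^*(H)=h_1h_2$. You argue algebraically instead. You write $h_1h_2=P(\overline f)$ and observe that any nontrivial factorization $P=P_1P_2$ would make some $P_i(\overline f)$ an associate of $h_1$, contradicting $h_1\notin\Kf$. Hence $P$ is irreducible, and it equals $\varphi_*(h_1)$ by uniqueness.

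Your version is more self-contained in two ways. It pinpoints exactly where the hypothesis $h_1\notin\Kf$ is needed. In the paper that hypothesis is hidden in the unproved claim that the two images coincide, and in the multiplicity count, where one must rule out $H(\overline f)$ being an associate of $h_1$. Your version also yields the coincidence of the images as a byproduct, since both are dense in the irreducible hypersurface $\V(P)$. The paper's version, in exchange, keeps the geometric picture of $\V(h_2)$ as the divisor paired with $\V(h_1)$ in the fibres of $\varphi$.

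Two small corrections.

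First, in Step~2 you neither need nor can always invoke Proposition~\ref{frac}(1): if $P=Q^2$, the two factors are not coprime. The nonconstancy of the $P_i(\overline f)$ comes from the injectivity of $\varphi^*$. The fact that $h_1h_2$ has exactly two irreducible factors then already forces one of the $P_i(\overline f)$ to be an associate of $h_1$.

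Second, the fallback in your last paragraph is unnecessary, because Lemma~\ref{lemirr} as stated gives $g\in\Kof$. It would also not work as sketched. If $\tau(g)=-g$, then $\tau(h_1h_2)=-h_1h_2$, so $h_1h_2\notin\Kof$ and it can never be a pullback times a constant. That case would have to be excluded outright rather than absorbed.
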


\begin{proof}
	Using Lemma~\ref{lemirr}, we write $\tau(h_1) = h_2 \cdot g$ with $g$ invertible on $\varGamma$. Applying~$\tau$ again shows that $h_1 h_2$ is invariant, hence lies in $\Kf$.
	
	The images of $\V(h_1)$ and $\V(h_2)$ coincide, so their common image is defined by some irreducible~$H \in \Kf$. Comparing multiplicities shows that $\varphi^*(H) = h_1 h_2$.
\end{proof}

\begin{lem}\label{creasing2}
	Let $h$ be an irreducible polynomial defining a branching divisor. Then
	\[
	\varphi^*(\varphi_*(h)) = h^2
	\qquad\text{and}\qquad
	\tau(h) = -h.
	\]
\end{lem}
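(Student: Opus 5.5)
Throughout, as in this part of the section, $\varphi$ has degree two and no contracted divisors. The plan is to reduce to Lemma~\ref{semiinv}: we show $h\notin\Kf$ and $\varphi^*(\varphi_*(h))=h^2$, and then $\tau(h)=-h$ follows from Lemma~\ref{deg}. Put $H=\varphi_*(h)$. By Definition~\ref{defcreas}, $\V(h)$ is not contracted and $H(\overline f)=h^2h_0$ for some $h_0\in\Kx$. The goal is to prove that $h_0$ is a constant.

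First, $h\notin\Kf$. If $h=G(\overline f)$, then $G$ is irreducible, because $h$ is irreducible. Also $H(\overline f)$ vanishes on $\V(h)$, so $H$ vanishes on $\varphi(\V(h))$, which is dense in $\V(G)$. Hence $G\mid H$, and therefore $H=cG$ and $H(\overline f)=ch$. This is not divisible by $h^2$, a contradiction.

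Next, split into cases according to whether $h$ is invertible on $\varGamma$. If $h\notin\K[\varGamma]^\times$, then Lemma~\ref{neib} gives $\varphi^*(H)=h\,h_2$ with $h_2$ irreducible. Since $h^2\mid h\,h_2$, we get $h_2=ch$, so $\varphi^*(H)=c\,h^2$. Absorbing the constant into $H$, Lemma~\ref{semiinv} applies.

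If $h\in\K[\varGamma]^\times$, Lemma~\ref{invsemiinv} gives either $h\in\Kf$ or $\tau(h)=-h$. The first is excluded, so $\tau(h)=-h$. Lemma~\ref{deg} then gives $h^2\in\Kf$, and Lemma~\ref{semiinv} again yields $\varphi^*(\varphi_*(h))=h^2$.

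The main obstacle is making sure the $\varGamma$-invertible case really excludes $h\in\Kf$, and that in the noninvertible case $h_2$ is genuinely irreducible and the divisibility $h^2\mid h\,h_2$ forces $h_2\sim h$. Both rely on the earlier lemmas, in particular on Lemma~\ref{lemirr} for the irreducibility of $h_2$. If the invertible case causes trouble, a fallback is the fibre argument from the proof of Lemma~\ref{invsemiinv}. There, a general point $a\in\V(h)$ paired with a point of $\V(h_0)$ in the same fibre would lie in $\Delta$, because $\varGamma$ and $Z'$ are excluded, the latter by Corollary~\ref{dimcont}. This forces every irreducible factor of $h_0$ to be $h$, so $H(\overline f)=c\,h^k$ with $k\ge 2$. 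Then Lemma~\ref{deg}(3) gives $\tau(h)=\pm h$, hence $\tau(h)=-h$ since $h\notin\Kof$ by Proposition~\ref{frac}(2). Finally, $h^2\in\Kf$ and the irreducibility of $H$ force $k=2$.
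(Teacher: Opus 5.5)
Your proposal is correct and follows the paper's own proof. Both split on whether $h$ is invertible on $\varGamma$, use Lemma~\ref{invsemiinv} in the invertible case and Lemma~\ref{neib} otherwise (where $h^2\mid h\,h_2$ forces $h_2\sim h$), and finish with Lemmas~\ref{deg} and~\ref{semiinv}. You additionally prove $h\notin\Kf$, which the paper only asserts.
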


\begin{proof}
	Since $h$ defines branching, it is not contained in $\Kf$. If $h$ is invertible on $\varGamma$, the statement follows from previous results on invertible elements.
	
	Otherwise we apply Lemma~\ref{neib}. In this case branching forces $h_1 = h_2$, hence $\varphi^*(\varphi_*(h)) = h^2$. The relation $\tau(h) = -h$ follows from Lemma~\ref{deg}.
\end{proof}

\begin{propos}\label{semiinv2}
	Let $\varphi$ be a degree-two map without contracted divisors. Then there exists an irreducible $s \in \Kx$ such that $\tau(s) = -s$. Moreover, $s$ is unique up to multiplication by a constant.
\end{propos}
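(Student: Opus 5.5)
The plan is to start from the anti-invariant polynomial of Lemma~\ref{semiinv1}, show that all of its irreducible factors are themselves anti-invariant, and then use unique factorization in $\Kf$ to force a single factor. Uniqueness will come from the same factorization argument.

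\emph{Existence.} By Lemma~\ref{semiinv1} there is $s\in\Kx$ with $\tau(s)=-s$ and $S(\overline f)=s^2$, where $S\in\Kf$ is square-free.

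1. Factor $s=p_1\cdots p_k$ into irreducibles in $\Kx$. Since $s\notin\Kf$, we have $k\ge1$.

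2. Fix $i$. Some irreducible factor $S_j$ of $S$ satisfies $p_i\mid S_j(\overline f)$. Because $p_i$ is irreducible, this gives $\varphi_*(p_i)=S_j$.

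3. Since $S(\overline f)=s^2$ and $S$ is square-free, $S_j(\overline f)$ is a product of some of the $p_l^2$. In particular $p_i^2\mid \varphi^*(\varphi_*(p_i))$.

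4. There are no contracted divisors, so $\V(p_i)$ is not contracted. Hence $\V(p_i)$ is a branching divisor, and Lemma~\ref{creasing2} gives $\varphi^*(\varphi_*(p_i))=p_i^2$ and $\tau(p_i)=-p_i$.

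5. The equality $S_j(\overline f)=p_i^2$ means that distinct $p_i$ (up to scalars) correspond to distinct irreducible factors $P_i:=\varphi_*(p_i)$ of $S$. Also $\tau(s)=(-1)^k s=-s$, so $k$ is odd.

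6. Suppose $k\ge 3$. Then $q=p_1p_2$ satisfies $\tau(q)=q$, so $q\in\Kx\cap\Kof=\Kf$ by Proposition~\ref{frac}(2). Write $q=Q(\overline f)$. Then $Q^2=P_1P_2$ in $\Kf$. Since $f_1,\dots,f_n$ are algebraically independent, $\Kf$ is a polynomial ring and hence a UFD.

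7. We still need $P_1\neq P_2$ up to scalars. If $P_1=cP_2$, then $p_1^2=c\,p_2^2$, so $p_1$ and $p_2$ are proportional. In that case $p_1p_2=\lambda p_1^2\in\Kf$, and the factors $p_1p_2$ can be absorbed into $S_0$ in the construction of Lemma~\ref{semiinv1}, contradicting square-freeness of $S$. So the $P_i$ are pairwise non-associate.

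8. With $P_1,P_2$ distinct irreducibles, $Q^2=P_1P_2$ is impossible. Hence $k=1$, and $s$ is irreducible.

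\emph{Uniqueness.} Let $s,s'$ be irreducible with $\tau(s)=-s$ and $\tau(s')=-s'$.

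1. The product $ss'$ is $\tau$-invariant, so by Proposition~\ref{frac}(2) we can write $ss'=Q(\overline f)$ with $Q\in\Kf$.

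2. By Lemma~\ref{deg}, $s,s'\notin\Kf$ and $s^2,s'^2\in\Kf$. Lemma~\ref{semiinv} then gives $s^2=\varphi^*(P)$ and $s'^2=\varphi^*(P')$ with $P=\varphi_*(s)$ and $P'=\varphi_*(s')$ irreducible in $\Kf$.

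3. Injectivity of $\varphi^*$ turns $Q(\overline f)^2=s^2s'^2$ into $Q^2=PP'$ in the UFD $\Kf$. This forces $P'=cP$, so $s'^2=c\,s^2$ and $s'=\pm\sqrt{c}\,s$.

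\emph{Main obstacle.} The delicate step is step 7 of the existence part: excluding repeated factors $p_i$ and identifying the image polynomials $\varphi_*(p_i)$ with distinct irreducible factors of $S$. I would handle it as above, using Lemma~\ref{creasing2}'s equality $\varphi^*(\varphi_*(p_i))=p_i^2$ together with square-freeness of $S$. Everything else is unique factorization in $\Kf$ plus Proposition~\ref{frac}.
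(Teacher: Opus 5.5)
Your proof is correct. The existence part follows the paper's route, and the uniqueness part takes a different but valid path.

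\textbf{Existence.} Steps 1--4 reproduce the argument of Corollary~\ref{creas}: an irreducible factor of the element from Lemma~\ref{semiinv1} defines a branching divisor, and Lemma~\ref{creasing2} then makes it anti-invariant. Existence is already finished at the end of Step~4, since $p_1$ is an irreducible polynomial with $\tau(p_1)=-p_1$.

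Steps 5--8 prove a stronger fact that the statement does not need: the element $s$ of Lemma~\ref{semiinv1} is itself irreducible. These steps can be shortened. For $k\ge 2$, Proposition~\ref{frac}(2) gives $p_1p_2=Q(\overline f)\in\Kf$. Also $(p_3\cdots p_k)^2$ is a $\tau$-invariant polynomial, hence lies in $\Kf$. So $Q^2$ divides $S$ in $\Kf$, contradicting square-freeness. This is what your absorption argument in Step~7 really shows, and it makes the case distinction on $P_1,P_2$ and Step~8 unnecessary.

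In Step~3 you pass from $p_i^2\mid S(\overline f)$ to $p_i^2\mid S_j(\overline f)$. This needs the $S_j(\overline f)$ to be pairwise coprime, which is Proposition~\ref{frac}(1), and you should cite it. That is where the absence of contracted divisors enters; the paper's Corollary~\ref{creas} leaves it equally tacit.

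\textbf{Uniqueness.} You use the invariant product $ss'\in\Kf$ and Lemma~\ref{semiinv} to write $s^2$ and $s'^2$ as pullbacks of irreducibles $P,P'$. You then conclude from unique factorization of $Q^2=PP'$ in $\Kf$. The paper instead uses the invariant quotient $s_0/s\in\Kof$ together with Proposition~\ref{frac}(3). That shows directly that two coprime anti-invariant irreducibles would both lie in $\Kf$, which is impossible.

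The paper's argument is shorter and avoids Lemma~\ref{semiinv}. Yours also records $s^2=\varphi^*(\varphi_*(s))$ with $\varphi_*(s)$ irreducible. That is the fact the paper relies on later when it asserts that $S=s^2$ is irreducible.
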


\begin{proof}
	By Corollary~\ref{creas}, a branching divisor exists. Let $s$ be an irreducible polynomial defining it. Then $\tau(s) = -s$ by Lemma~\ref{creasing2}.
	
	If $s_0$ is another such polynomial, then $\tau(s_0/s) = s_0/s$, hence $s_0/s \in \Kof$. By Proposition~\ref{frac},~$s_0/s \in \Kf$, so $s_0 = \lambda s$.
\end{proof}

\begin{cor}\label{jacdiv}
	If $\varphi$ has no contracted divisors, then the Jacobian $\Jph$ has a unique irreducible divisor $s$.
\end{cor}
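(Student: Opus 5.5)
The plan is to combine the classification theorem with the uniqueness statement of Proposition~\ref{semiinv2}. Here $\varphi$ is understood to be a degree-two map without contracted divisors, as in the standing assumption of this subsection.

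First I would show that $\Jph$ has at least one irreducible divisor. By Corollary~\ref{creas}, $\Jph\notin\K^\times$. It is also nonzero, because $\varphi$ is dominant: it has degree two, so the field extension is finite and the Jacobian criterion applies. Hence $\Jph$ has some irreducible divisor $h$.

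Next, let $h$ be any irreducible divisor of $\Jph$ and put $X=\V(h)$. By Theorem~\ref{classif}, $X$ is either contracted by $\varphi$ or is a branching divisor. Since $\varphi$ has no contracted divisors, and $X$ is a hypersurface, the first alternative is excluded. So $X$ is a branching divisor. Lemma~\ref{creasing2} then gives $\tau(h)=-h$, so $h$ is an irreducible polynomial anti-invariant under $\tau$.

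Finally, Proposition~\ref{semiinv2} says that an irreducible $s\in\Kx$ with $\tau(s)=-s$ is unique up to a nonzero constant. Therefore every irreducible divisor $h$ of $\Jph$ equals $\lambda s$ for some $\lambda\in\K^\times$. Moreover $s$ itself defines a branching divisor, so it does divide $\Jph$ by Proposition~\ref{branch}. Thus, up to scalars, $s$ is the unique irreducible divisor of $\Jph$.

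The argument is essentially bookkeeping once the earlier results are in hand. The only step needing care is invoking Lemma~\ref{creasing2}. Its proof relies on the case analysis of whether $h$ is invertible on $\varGamma$ (Lemmas~\ref{invsemiinv} and~\ref{neib}). I would verify that the hypotheses of those lemmas are met, namely that $h\notin\Kf$. This holds because a branching $h$ satisfies $h^2\mid\varphi^*(\varphi_*(h))$ with $\varphi_*(h)$ irreducible. If $h$ were in $\Kf$, then by Proposition~\ref{frac}(1) the polynomial $\varphi^*(\varphi_*(h))$ could not be divisible by $h^2$ unless $\varphi_*(h)$ is associate to $h$, and in that case $\varphi^*(\varphi_*(h))$ is not divisible by $h^2$ either.
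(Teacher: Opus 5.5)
Your proof is correct and follows the paper's argument. Theorem~\ref{classif} makes every irreducible divisor of $\Jph$ a branching divisor, and the uniqueness part of Proposition~\ref{semiinv2}, applied via the anti-invariance given by Lemma~\ref{creasing2} (a step the paper's two-line proof leaves implicit), identifies that divisor with $s$. Your additional checks are correct details the paper does not spell out: that $\Jph$ is nonzero and, by Corollary~\ref{creas}, nonconstant, and that a branching $h$ cannot lie in $\Kf$.
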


\begin{proof}
	By Theorem~\ref{classif}, every irreducible divisor of the Jacobian defines branching. By Proposition~\ref{semiinv2}, such a divisor is unique.
\end{proof}

\subsection{Jacobian}

By Corollary~\ref{jacdiv}, if $\varphi$ has degree-two and no contracted divisors, then
\[
\Jph = s^k
\]
for some irreducible $s$ such that $\tau(s) = -s$ and some integer $k>0$. 
Fix such an $s$, and set~$S = s^2 \in \Kof$, which lies in $\Kf$ and is irreducible since $\varphi$ has no contracted divisors.

Consider the polynomial
\[
q=t^2-S \in \K[t,f_1,\ldots,f_n]
\]
and the hypersurface
\[
M=\V(q)\subset\A^{n+1}.
\]

Then $\varphi$ factors as $\varphi=\pi\circ\psi$, where
\[
\psi:\A^n\to M,\qquad
\psi(\overline{x})=(s(\overline{x}),\varphi(\overline{x})),
\]
and $\pi$ is the projection
\[
\pi(t,f_1,\ldots,f_n)=(f_1,\ldots,f_n).
\]

For $j=0,\ldots,n$, we define polynomial maps
\[
\varphi_0=\varphi,\qquad
\varphi_j=(f_1,\ldots,f_{j-1},s,f_{j+1},\ldots,f_n)\ \text{for } j\ge1.
\]

\begin{lem}\label{collinear}
	At smooth points of $M$ the vectors
	\[
	(s^k,|J(\varphi_1)|,\ldots,|J(\varphi_n)|)
	\]
	and
	\[
	\left(2s,
	\psi^*\!\left(\frac{\partial q}{\partial f_1}\right),
	\ldots,
	\psi^*\!\left(\frac{\partial q}{\partial f_n}\right)\right)
	\]
	are collinear.
\end{lem}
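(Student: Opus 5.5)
The plan is to obtain both vectors as elements of the left kernel of one $(n+1)\times n$ matrix. That matrix has generic rank $n$, so its left kernel over $\Kox$ is one-dimensional.

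Let $A$ be the $(n+1)\times n$ matrix whose rows are
\[
\frac{\partial s}{\partial\overline{x}},\ \frac{\partial f_1}{\partial\overline{x}},\ \ldots,\ \frac{\partial f_n}{\partial\overline{x}} .
\]

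\textbf{First relation.} Differentiate the identity $s^2=S(\overline{f})$ by the chain rule:
\[
2s\,\frac{\partial s}{\partial\overline{x}}-\sum_{j=1}^n \frac{\partial S}{\partial f_j}(\overline{f})\,\frac{\partial f_j}{\partial\overline{x}}=0 .
\]
Since $q=t^2-S$, we have $\partial q/\partial t=2t$ and $\partial q/\partial f_j=-\partial S/\partial f_j$. Also $\psi^*(t)=s$. So this identity says exactly $vA=0$ for
\[
v=\Bigl(2s,\ \psi^*\!\bigl(\tfrac{\partial q}{\partial f_1}\bigr),\ldots,\psi^*\!\bigl(\tfrac{\partial q}{\partial f_n}\bigr)\Bigr),
\]
which is $\psi^*$ of the gradient of $q$.

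\textbf{Second relation.} Let $w$ be the vector of signed maximal minors of $A$, with $w_i=(-1)^i\det A_{\hat i}$ for $i=0,\ldots,n$. For each column $c$ of $A$, the sum $\sum_i w_i A_{ic}$ is the Laplace expansion of the $(n+1)\times(n+1)$ matrix obtained by adjoining column $c$ to $A$. That matrix has a repeated column, so $wA=0$.

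\textbf{Identifying the minors.} Deleting the row $\partial s/\partial\overline{x}$ leaves $J(\varphi)$, so $w_0=\Jph=s^k$ by Corollary~\ref{jacdiv}. Deleting the row of $f_j$ and moving the $s$-row into position $j$ gives the Jacobian matrix of $\varphi_j$, so $w_j=\pm|J(\varphi_j)|$. The sign here must be tracked carefully. With the convention above, $w_j=-|J(\varphi_j)|$ uniformly in $j$, which differs from the stated vector only by flipping the common sign of the last $n$ coordinates. I expect this sign bookkeeping to be the only delicate point. I would check it against the paper's convention, absorbing it into the sign chosen for $q$ or for the minors if necessary.

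\textbf{Collinearity.} Over $\Kox$ the rank of $A$ equals $n$, because its last $n$ rows have determinant $\Jph\neq0$. Hence the left kernel is one-dimensional, and $v$ and $w$ are proportional over $\Kox$. Equivalently, all $2\times 2$ minors of the $2\times(n+1)$ matrix with rows $v$ and $w$ vanish as polynomials in $\Kx$. These are polynomial identities, so they hold at every point, giving pointwise collinearity.

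\textbf{Role of smoothness.} At points whose image is a smooth point of $M$, the gradient of $q$ is nonzero, so $v\neq0$ there. The collinearity is then a genuine proportionality rather than a degenerate one, and this is exactly why the statement is restricted to smooth points of $M$.
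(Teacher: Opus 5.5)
Your argument is essentially the paper's: both vectors annihilate $J(\psi)$ from the left. The paper phrases the first as the pullback of the normal vector of $M$, which is exactly your chain-rule identity for $s^2=S(\overline f)$. The second is the same Laplace-expansion fact about signed maximal minors. The one real difference is the final step. The paper argues pointwise, via the one-dimensionality of the normal line to $M$ at a smooth point $\psi(a)$. This tacitly uses that $J(\psi)(a)$ has rank $n$, so that its columns span $T_{\psi(a)}M$; elsewhere all maximal minors vanish. You use instead that $J(\psi)$ has rank $n$ over $\Kox$ because $\Jph\neq0$. This avoids the rank case distinction and makes the smoothness hypothesis unnecessary, since you get collinearity at every point. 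It also directly yields the proportionality over $\Kox$ that the proof of Theorem~\ref{mainth} invokes; the pointwise statement gives that only after an extra density argument.

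Your sign computation $w_j=-|J(\varphi_j)|$ is correct. You should commit to it, because no choice of convention can absorb it. The polynomial $q=t^2-S$ and the entry $2s$ are fixed. The left kernel of $J(\psi)$ over $\Kox$ is a line, so the only vector in it with first entry $\Jph$ is $(\Jph,-|J(\varphi_1)|,\ldots,-|J(\varphi_n)|)$. Hence the lemma as printed is false by a sign. For instance, take $n=1$ and $\varphi=x^2$. Then $\Jph=2x=s$, $S=4f_1$, $|J(\varphi_1)|=2$ and $\psi^*(\partial q/\partial f_1)=-4$. The two vectors are $(2x,2)$ and $(4x,-4)$, which are not collinear for $x\neq0$, even though $M=\V(t^2-4f_1)$ is smooth. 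The paper's proof makes exactly this slip when it asserts $A_{j+1}=|J(\varphi_j)|$. What you have proved is the corrected lemma, with $|J(\varphi_j)|$ replaced by $-|J(\varphi_j)|$; equivalently, $\psi^*(\partial q/\partial f_j)$ is replaced by $\psi^*(\partial S/\partial f_j)$. The correction is harmless downstream. Theorem~\ref{mainth} only compares first coordinates and uses $\gcd(|J(\varphi_0)|,\ldots,|J(\varphi_n)|)$, and neither depends on the sign.
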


\begin{proof}
	The hypersurface $M$ is defined by the equation $q=t^2-S$, hence its normal vector at a smooth point is
	\[
	\left(2t,\frac{\partial q}{\partial f_1},\ldots,
	\frac{\partial q}{\partial f_n}\right).
	\]
	Pulling back via $\psi$, we obtain
	\[
	\left(2s,
	\psi^*\!\left(\frac{\partial q}{\partial f_1}\right),
	\ldots,
	\psi^*\!\left(\frac{\partial q}{\partial f_n}\right)\right).
	\]
	
	On the other hand, consider the Jacobian matrix
	\[
	J(\psi)=
	\begin{pmatrix}
		\frac{\partial s}{\partial x_1}&\cdots&\frac{\partial s}{\partial x_n}\\
		\frac{\partial f_1}{\partial x_1}&\cdots&\frac{\partial f_1}{\partial x_n}\\
		\vdots&\ddots&\vdots\\
		\frac{\partial f_n}{\partial x_1}&\cdots&\frac{\partial f_n}{\partial x_n}
	\end{pmatrix}.
	\]
	Its columns are tangent vectors to $M$. Let $(A_1,\ldots,A_{n+1})$ be the vector of signed maximal minors of $J(\psi)$. Then
	\[
	(A_1,\ldots,A_{n+1})\cdot J(\psi)=0,
	\]
	hence this vector is orthogonal to all tangent vectors and therefore defines a normal vector.
	
	Moreover, by construction of the minors we have
	\[
	A_1=|J(\varphi_0)|=\Jph,\qquad
	A_{j+1}=|J(\varphi_j)| \ \text{for } j=1,\ldots,n.
	\]
	
	Since the normal space to a hypersurface at a smooth point is one-dimensional, the two normal vectors are collinear.
\end{proof}

\begin{lem}\label{ngcd2}
	If $\V(s)$ is not contracted, then
	\[
	\gcd(|J(\varphi_0)|,|J(\varphi_1)|,\ldots,|J(\varphi_n)|)=1 .
	\]
\end{lem}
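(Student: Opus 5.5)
The plan is to reduce the claim to the non-divisibility of a single minor by $s$, and then to deduce that non-divisibility from Proposition~\ref{cont}.

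\emph{Reduction.} Any common divisor of the minors divides $A_1=|J(\varphi_0)|=\Jph$. By Corollary~\ref{jacdiv} we have $\Jph=s^k$ with $s$ irreducible, so the only possible nonconstant irreducible common divisor is $s$ itself. It therefore suffices to show that $s$ does not divide every minor $A_1,\ldots,A_{n+1}$ of $J(\psi)$. Equivalently, we must show that at a general point $a\in X=\V(s)$ the $(n+1)\times n$ matrix $J(\psi)(a)$ has rank $n$.

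\emph{Rank of $J(\psi)$ on $X$.} Suppose, for contradiction, that all maximal minors of $J(\psi)$ vanish identically on $X$. Then at every point $a\in X$ there is a nonzero vector $v\in\K^n$ with $J(\psi)(a)\,v=0$. This means two things:
\[
\frac{\partial s}{\partial\overline{x}}(a)\cdot v=0,
\qquad
J(\varphi)(a)\,v=0 .
\]
Since $s$ is irreducible, $\frac{\partial s}{\partial\overline{x}}$ does not vanish identically on $X$. Hence at a general point $a\in X$, which is a smooth point, the first condition says exactly that $v\in T_{a,X}$. The second condition then says that $d_a\varphi$ has a nontrivial kernel on $T_{a,X}$, so
\[
\dim d_a\varphi(T_{a,X})<n-1=\dim X .
\]
This holds at a general point of $X$, so by Proposition~\ref{cont} the divisor $X$ is contracted by $\varphi$. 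This contradicts the hypothesis, which for a degree-two map without contracted divisors holds anyway.

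\emph{Conclusion and main step.} Consequently some minor $A_{j+1}=|J(\varphi_j)|$ does not vanish on $X$, so $s$ does not divide it. Combined with the reduction, this gives $\gcd(|J(\varphi_0)|,\ldots,|J(\varphi_n)|)=1$. The only delicate point is the second step: translating the vanishing of all minors of $J(\psi)$ into a kernel vector of $d\varphi$ that lies in $T_{a,X}$. This is where the extra row $\frac{\partial s}{\partial\overline{x}}$ of $J(\psi)$ is used, since it forces the kernel vector to be tangent to $X$. Once that is in place, Proposition~\ref{cont} finishes the argument, and the collinearity of Lemma~\ref{collinear} is not needed.
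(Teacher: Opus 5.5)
Your proof is correct, but it takes a different route from the paper's.

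\emph{The paper's route.} It works with one auxiliary map at a time.
\begin{enumerate}
\item It picks a smooth point of the hypersurface $\overline{\varphi(\V(s))}$ and an index $j$ for which the $j$-th coordinate of its normal vector is nonzero.
\item For this $j$, the projection forgetting the $j$-th coordinate does not contract $\overline{\varphi(\V(s))}$, so $\varphi_j$ does not contract $\V(s)$.
\item Since $s$ is itself a coordinate of $\varphi_j$, its image polynomial is the coordinate function $t_j$, and $\V(s)$ is not a branching divisor for $\varphi_j$.
\item Applying Theorem~\ref{classif} to $\varphi_j$ then gives $s\nmid|J(\varphi_j)|$.
\end{enumerate}

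\emph{Your route.} You argue directly from linear algebra. At a smooth point $a$ of $\V(s)$ you have $\ker J(\psi)(a)=T_{a,\V(s)}\cap\ker d_a\varphi$. So if all maximal minors vanished on $\V(s)$, then $d\varphi$ would be degenerate on the tangent space at a general point, contradicting Proposition~\ref{cont}.

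\emph{Comparison.} Your argument is shorter and needs only Proposition~\ref{cont}. It avoids running Theorem~\ref{classif} on the auxiliary maps $\varphi_j$, identifying their image polynomials, and choosing $j$ from a smooth point of the image. The paper's argument, in exchange, says which minor is coprime to $s$: any $|J(\varphi_j)|$ such that $\partial H/\partial f_j\not\equiv 0$ on $\V(H)=\overline{\varphi(\V(s))}$. It also shows the contraction/branching dichotomy at work. Both proofs use Corollary~\ref{jacdiv} in the same way to reduce to $s$ as the only candidate common factor.
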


\begin{proof}
	For $j=1,\ldots,n$, denote by
	\[
	\pi_j:\A^n \to \A^{n-1}
	\]
	the projection forgetting the $j$-th coordinate.
	
	First note that for any $j>0$ the polynomial $s$ lies in~$\K[f_1,\ldots,f_{j-1},s,f_{j+1},\ldots,f_n]$, hence $s$ does not define branching for $\varphi_j$.
	Let $X=\overline{\varphi(\V(s))}$ and let $a\in X$ be a smooth point. The tangent space at $a$ is given by
	\[
	\alpha_1x_1+\ldots+\alpha_nx_n=0
	\]
	with $(\alpha_1,\ldots,\alpha_n)\neq0$. Choose $\alpha_j\neq0$. Then $\pi_j$ does not contract $X$ by Proposition~\ref{cont}, hence~$\varphi_j$ does not contract $X$.
	
	Thus $|J(\varphi_j)|$ is not divisible by $s$ by Theorem~\ref{classif}, while $|J(\varphi_0)|=\Jph$ is divisible only by $s$ by Corollary~\ref{jacdiv}.
\end{proof}

\medskip

\begin{theorem}\label{mainth}
	Let $\varphi:\A^n\to\A^n$, $\varphi=(\overline f)$ be a degree-two map without contracted divisors and suppose
	\[
	\Gal(\Kox,\Kof)=\{id,\tau\}.
	\]
	Then
	\[
	\Jph=s,
	\]
	where $s$ is irreducible and $\tau(s)=-s$.
\end{theorem}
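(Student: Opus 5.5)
By Corollary~\ref{jacdiv} we already know $\Jph=c\,s^k$ with $s$ irreducible, $\tau(s)=-s$, $c\in\K^\times$ and $k\ge1$. After rescaling $s$ we may take $c=1$. So the whole task is to show $k=1$. The plan is to compare the two collinear vectors of Lemma~\ref{collinear} and use the coprimality of Lemma~\ref{ngcd2}.

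First, compute the normal vector of $M$ pulled back by $\psi$. Since $q=t^2-S(\overline f)$, we have $\partial q/\partial f_j=-\partial S/\partial f_j$, so the vector is
\[
\Bigl(2s,\,-\psi^*\bigl(\tfrac{\partial S}{\partial f_1}\bigr),\ldots,-\psi^*\bigl(\tfrac{\partial S}{\partial f_n}\bigr)\Bigr).
\]
The key observation is that none of the entries $\psi^*(\partial S/\partial f_j)=\frac{\partial S}{\partial f_j}(\overline f)$ is divisible by $s$, except for the ones that vanish identically. To see this, differentiate $S(\overline f)=s^2$ to get
\[
\frac{\partial S}{\partial\overline f}(\overline f)\cdot J(\varphi)=2s\,\frac{\partial s}{\partial\overline x}.
\]
Suppose $s$ divided every $\frac{\partial S}{\partial f_j}(\overline f)$. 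Then the gradient of $S$ would vanish on $\varphi(\V(s))$. Since $\V(s)$ is not contracted, $\overline{\varphi(\V(s))}=\V(S)$, and this would force $\V(S)$ to be singular everywhere, which contradicts the irreducibility of $S$ (the same argument as in Proposition~\ref{branch}). Hence some entry of the normal vector, other than the first one $2s$, is not divisible by $s$.

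Next, collinearity of the vectors holds on the dense set of smooth points of $M$ pulled back to $\A^n$. So all $2\times2$ minors vanish identically as polynomials in $\overline x$:
\[
s^k\cdot\Bigl(-\tfrac{\partial S}{\partial f_j}(\overline f)\Bigr)=2s\,|J(\varphi_j)|\qquad(j=1,\ldots,n).
\]
Pick $j$ with $s\nmid \frac{\partial S}{\partial f_j}(\overline f)\neq0$. Dividing by $s$ gives $|J(\varphi_j)|=-\tfrac12 s^{k-1}\frac{\partial S}{\partial f_j}(\overline f)$. If $k\ge2$, then $s$ divides $|J(\varphi_j)|$ for every $j$: for those $j$ with nonzero $\partial S/\partial f_j(\overline f)$ this follows from the formula, and for the others $|J(\varphi_j)|=0$. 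Together with $s\mid\Jph$, this contradicts Lemma~\ref{ngcd2}, whose hypothesis holds because there are no contracted divisors. Hence $k=1$ and $\Jph=s$ up to a constant.

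The main obstacle is justifying that collinearity at smooth points of $M$ yields polynomial identities on all of $\A^n$. This needs $\psi(\A^n)$ to meet the smooth locus of $M$ densely. I would argue it as follows: $\psi$ is dominant onto the irreducible hypersurface $M$, which is irreducible because $S$ is not a square in $\Kf$, and $M$ is reduced. Therefore its singular locus is a proper closed subset, and its preimage is a proper closed subset of $\A^n$. It also has to be checked that the minors vector of $J(\psi)$ is not identically zero, so that it genuinely spans the normal line. This holds since $\Jph\ne0$.
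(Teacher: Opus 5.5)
Your proof is correct and follows the paper's route: combine the collinearity of Lemma~\ref{collinear} with the coprimality of Lemma~\ref{ngcd2}, then compare the first coordinates $s^k$ and $2s$. The paper encodes collinearity through coprime multipliers $\lambda_1,\lambda_2$ and forces $\lambda_2\in\K^\times$, while you use the equivalent vanishing $2\times2$ minors $s^k\cdot\psi^*(\partial q/\partial f_j)=2s\,|J(\varphi_j)|$; your ``key observation'' that some $\frac{\partial S}{\partial f_j}(\overline f)$ is not divisible by $s$ is not needed, since dividing each of these identities by $s$ already gives $s\mid |J(\varphi_j)|$ for every $j$ when $k\ge 2$.
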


\begin{proof}
	By Lemma~\ref{collinear}, there exist $\lambda_1,\lambda_2\in\Kox$ such that
	\[
	\lambda_1(s^k,|J(\varphi_1)|,\ldots,|J(\varphi_n)|)
	=
	\lambda_2\left(2s,
	\psi^*\!\left(\frac{\partial q}{\partial f_1}\right),
	\ldots,
	\psi^*\!\left(\frac{\partial q}{\partial f_n}\right)\right).
	\]
	
	Multiplying both sides by a common denominator, we may assume that~$\lambda_1,\lambda_2\in\Kx$. Dividing by their greatest common divisor, we may further assume that they are coprime.
	By Lemma~\ref{ngcd2}, $\gcd(|J(\varphi_0)|,\ldots,|J(\varphi_n)|)=1$,
	so $\lambda_2\in\K^\times$.
	Comparing the first coordinates, we obtain
	\[
	\lambda_1 s^k = 2\lambda_2 s.
	\]
	Hence $\lambda_1\in\K^\times$ and $k=1$.
	
\end{proof}

\begin{ex}
	As an application of Theorem~\ref{mainth}, we compute the Galois involution for the map from Example~\ref{4ex}:
	
	\[
	\varphi
	\begin{pmatrix}
		x\\y\\z\\w
	\end{pmatrix}
	=
	\begin{pmatrix}
		wx^2+zy\\
		zx+wy\\
		z\\
		w
	\end{pmatrix}.
	\]
	
	We have already shown that this map has degree two and has no contracted divisors. Hence Theorem~\ref{mainth} applies, and the Jacobian determinant is irreducible and anti-invariant under the Galois involution $\tau$.
	
	Since the last two coordinates are preserved by $\varphi$, we have
	\[
	\tau(z)=z, \qquad \tau(w)=w.
	\]
	
	We compute the Jacobian determinant:
	\[
	\Jph=2w^2x - z^2.
	\]
	By Theorem~\ref{mainth},
	\[
	\tau(2w^2x - z^2) = 2w^2\,\tau(x)-z^2=-(2w^2x - z^2),
	\]
	hence
	\[
	\tau(x)=-x+\frac{z^2}{w^2}.
	\]
	
	Since $f_2 = zx+wy$ is invariant under $\tau$, we obtain
	\[
	z\,\tau(x)+w\,\tau(y)=zx+wy.
	\]
	Substituting the expression for $\tau(x)$ gives
	\[
	-zx+\frac{z^3}{w^2}+w\,\tau(y)=zx+wy,
	\]
	and therefore
	\[
	\tau(y)=y+\frac{2z}{w}x-\frac{z^3}{w^3}.
	\]
	
	Thus the Galois involution is given by
	\[
	\tau(x,y,z,w)=\left(
	-x+\frac{z^2}{w^2},\;
	y+\frac{2z}{w}x-\frac{z^3}{w^3},\;
	z,\;
	w
	\right).\]
\end{ex}


\begin{thebibliography}{99}
	
	
	\bibitem{Bak}
	S.~Bakalarski,
	Jacobian problem for factorial varieties,
	\emph{Univ. Iagel. Acta Math.} \textbf{44} (2006), 31--34.
	
	\bibitem{BCW}
	H.~Bass, E.~H.~Connell, D.~Wright,
	The Jacobian conjecture: reduction of degree and formal expansion of the inverse,
	\emph{Bull. Amer. Math. Soc. (N.S.)} \textbf{7} (1982), no.~2, 287--330.
	
	\bibitem{BY}
	M.~M.~de~Bondt, D.~Yan,
	Irreducibility properties of Keller maps,
	\emph{Algebra Colloq.} \textbf{23} (2016), no.~4, 663--680.
	
	\bibitem{D}
	A.~V.~Domrina,
	Four-sheeted polynomial mappings in $\C^2$. The general case,
	\emph{Math. Notes} \textbf{65} (1999), no.~3, 386--389.
	
	\bibitem{DO}
	A.~V.~Domrina, S.~Y.~Orevkov,
	On four-sheeted polynomial mappings in $\C^2$. I. The case of an irreducible ramification curve,
	\emph{Math. Notes} \textbf{64} (1998), no.~6, 732--744.
	
	
	
	\bibitem{Harris}
	J.~Harris,
	\emph{Algebraic Geometry: A First Course},
	Graduate Texts in Mathematics, Vol.~133,
	Springer, New York, 1992.
	
	\bibitem{Jed}
	P.~J\k{e}drzejewicz,
	A characterization of Keller maps,
	\emph{J. Pure Appl. Algebra} \textbf{217} (2013), no.~1, 165--171.
	
	\bibitem{Keller}
	O.-H.~Keller,
	Ganze Cremona-Transformationen,
	\emph{Monatsh. Math. Phys.} \textbf{47} (1939), 299--306.
	
	\bibitem{O}
	S.~Y.~Orevkov,
	On three-sheeted polynomial mappings in $\C^2$,
	\emph{Math. USSR-Izv.} \textbf{29} (1987), no.~3, 587--596.
	
	\bibitem{Shaf}
	I.~R.~Shafarevich,
	\emph{Basic Algebraic Geometry I. Varieties in Projective Space},
	3rd ed., Springer, Berlin-Heidelberg, 2013.
	
	
\end{thebibliography}
\end{document}